\documentclass[letterpaper, DIV=17]{scrartcl}

\usepackage{amssymb, amsthm, mathtools,bbm}
\usepackage[square,sort,comma,numbers]{natbib}
\usepackage{comment}

\usepackage[none]{hyphenat}
\usepackage[english]{babel}
\usepackage{enumerate}
\usepackage{graphicx}%
\usepackage{subcaption}
\usepackage{dsfont}%
\usepackage{float}%
\usepackage{hyperref}
\usepackage{color}
\usepackage[utf8]{inputenc}
\usepackage{graphicx}
\usepackage{verbatim} 
	\usepackage{hyperref}
	\usepackage{comment}
	\usepackage{xfrac} 
	\usepackage{tensor} 
	\usepackage{listings}  

	\usepackage{pgfplots}              
	\pgfplotsset{compat=newest}
	\usepgfplotslibrary{fillbetween}

	\usepackage[title]{appendix}

	\usepackage[shortcuts]{extdash}  

\hypersetup{pdftitle={On Normality Preserving Operations},
  pdfauthor={Chokri Manai},
  pdfsubject={Normality preserving numbers and maps},
  pdfkeywords={normal numbers, deterministic numbers, normality preserving maps}}
\setkomafont{disposition}{\normalfont\bfseries}
\newtheorem{theorem}{Theorem}[section]
\newtheorem{cor}[theorem]{Corollary}
\newtheorem{prop}[theorem]{Proposition}
\newtheorem{lemma}[theorem]{Lemma}
\theoremstyle{definition}
\newtheorem{defn}[theorem]{Definition}

\numberwithin{equation}{section}
\newcommand{\E}{\mathbb E}
\newcommand{\Prob}{\mathbb P}
\newcommand{\N}{\mathbb N}
\newcommand{\Z}{\mathbb Z}
\newcommand{\Q}{\mathbb Q}
\newcommand{\R}{\mathbb R}
\newcommand{\T}{\mathbb T}
\newcommand{\1}{\mathbf 1}
\newcommand{\calN}{\mathcal N}
\newcommand{\calD}{\mathcal D}
\DeclareMathOperator{\supp}{supp}
\DeclareMathOperator{\dist}{dist}
\DeclareMathOperator{\Lip}{Lip}

\begin{document}
\title{On Normality Preserving Operations}
\author{Chokri Manai\\[2pt]
\small Courant Institute of Mathematical Sciences, New York University, USA; \\\small and Bonn University, Institute for Applied Mathematics, Bonn, Germany}
\date{}
\maketitle

\begin{abstract}
We study arithmetic operations and maps which preserve normality. Revisiting a recent argument of Dayan, Ganguly and Weiss, we give a streamlined,
self-contained presentation of the probabilistic proof that the non-zero
rationals are exactly the multiplicative normality preserving numbers in a
fixed integer base. We also include the extension to all bases simultaneously,
using the theorem of Hochman and Shmerkin. In the additive case, we extend Rauzy's characterization of deterministic numbers in a fixed integer base to absolutely normal numbers. Namely, a translation preserves absolute normality if and only if its parameter is deterministic in every integer base. We prove this extension by a sparse random perturbation argument.
Finally, we show that all locally $C^2$ normality preserving maps need to be affine-linear. The regularity assumption cannot be weakened to $C^{1,1}$: we construct a
non-affine $C^{1,1}$ diffeomorphism which preserves normality in every
integer base, and hence absolute normality.
\end{abstract}

\smallskip
\noindent\textbf{Keywords.} Normal numbers; deterministic numbers; Weyl's criterion;
normality preserving maps; Bernoulli measures; Fourier estimates. \\
\noindent{\textbf{MSC}: Primary 11K16; \, Secondary  37A35, 37A44, 42A38.}

\section{Introduction}

The study of normal numbers goes back to Borel who proved that almost
every real number is absolutely normal~\cite{Borel09}. Since then, there
have been many efforts to obtain a better understanding of their
properties~\cite{Bug12,DT97,Harman98}. Beautiful results concern the
construction and computation of normal numbers
\cite{ABSS17,BF02,BHS13,CE46,Champernowne1933}, the behavior of normality in
different bases~\cite{Cass59,Schmidt1960,Schmidt1962}, its relation to
fractal measures~\cite{ABS22,HS15}, and the interplay of normality and
non-linear maps~\cite{BB25,ManaiAdv, ManaiBernoulli,ManaiTranscendence}.
Yet, normal numbers remain largely mysterious. In particular, establishing
normality for familiar constants such as $\sqrt{2}$, $e$ or $\pi$ remains
a formidable problem; see~\cite{Bug12,BBCP04}. A related question asks
which arithmetic operations and which functions preserve normality.
This is the subject of the present work.

We recall first some standard terminology. Given an integer base $b\ge2$,
we use the canonical $b$-ary expansion of the fractional part of a real
number $x$, choosing the terminating expansion when there are two choices.
The number $x$ is simply $b$-normal if all digits are asymptotically
equidistributed. If every finite word $w$ occurs with asymptotic frequency
$b^{-|w|}$, then $x$ is $b$-normal. We denote the latter set by $\calN_b$.
Finally, $x$ is absolutely normal if it is normal in every integer base,
and we write
\[
 \calN:=\bigcap_{b\ge2}\calN_b.
\]
Throughout the paper, a map preserves one of these classes if it sends
every member of the class in its domain to a member of the same class.
Thus, preservation for maps means forward preservation; it does not
require the inverse map to preserve normality.

A very natural question is which numbers $\gamma$ preserve normality in
a multiplicative sense. Wall proved that all non-zero rational numbers
have this property~\cite{Wall49}. The converse is a consequence of the
work of Dayan, Ganguly and Weiss~\cite{DGW24}. Their Theorem~4 applies to
irrational affine images of the Bernoulli measures used below. Their
Corollary~5, concerning irrational dilations of the middle-thirds Cantor
set, also gives the converse for absolutely normal numbers. We record
the resulting characterization in the following form.

\begin{theorem}[Wall \cite{Wall49} and Dayan--Ganguly--Weiss \cite{DGW24}]
\label{thm:1}
Let $b\ge2$ be an integer. Then
\begin{equation}\label{eq:bpreserve}
 \{\gamma\in\R:\ \gamma x\in\calN_b
                 \text{ for all }x\in\calN_b\}
   =\Q\setminus\{0\}.
\end{equation}
Similarly,
\begin{equation}\label{eq:allpreserve}
 \{\gamma\in\R:\ \gamma x\in\calN
                 \text{ for all }x\in\calN\}
   =\Q\setminus\{0\}.
\end{equation}
\end{theorem}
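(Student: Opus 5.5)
The inclusion $\Q\setminus\{0\}\subseteq$ left-hand side is Wall's theorem. I would reprove it quickly via Weyl's criterion for $b$-normality: $x\in\calN_b$ iff $(b^n x)_n$ is equidistributed mod $1$. The steps are:
\begin{itemize}
\item For an integer $p\neq0$, the sequence $b^n px=p\,b^nx$ is equidistributed whenever $b^nx$ is, since for $h\neq0$ the frequency $hp\neq0$.
\item For $1/q$, I would use the characterisation that $x\in\calN_b$ iff $(b^{kn+j}x)_n$ is equidistributed for all $k\ge1$ and $0\le j<k$.
\item Writing $q=q_1q_2$ with $q_1$ composed of primes dividing $b$ and $\gcd(q_2,b)=1$, the factor $q_1$ is handled because $b^m/q_1$ is an integer for large $m$.
\item For $q_2$, I would take $k$ the order of $b$ mod $q_2$. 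Then $b^{kn}x/q_2$ differs from $\tfrac{1}{q_2}b^{kn}x$, and the standard lemma says $x/q$ is $b$-normal iff $x$ is (Wall's argument via counting blocks). I would simply cite Wall here.
\end{itemize}
Absolute normality is then immediate, base by base.

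The substance is the converse: for every irrational $\gamma$, produce $x\in\calN_b$ with $\gamma x\notin\calN_b$. The plan, following Dayan--Ganguly--Weiss, is probabilistic.
\begin{enumerate}
\item Fix a Bernoulli measure $\mu$ on base-$b$ digits that is non-uniform but close to uniform. Let $\nu$ be a random measure supported on numbers whose digits avoid a pattern or are biased in a fixed way, so that $\mu$-a.e.\ point is not $b$-normal.
\item Consider the pushforward $\gamma^{-1}_*\mu$, i.e.\ the law of $x=y/\gamma$ with $y\sim\mu$.
\item It suffices to show that $\gamma^{-1}_*\mu$-a.e.\ $x$ is $b$-normal. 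Then some $x\in\calN_b$ has $\gamma x=y$ non-normal.
\item By the Davenport--Erd\H{o}s--LeVeque criterion, a.e.\ normality follows from
\[
\sum_N \frac1N\int\Bigl|\frac1N\sum_{n\le N}e(hb^nx)\Bigr|^2d(\gamma^{-1}_*\mu)(x)<\infty,
\]
which reduces to decay of the Fourier coefficients $\hat\mu(h(b^n-b^m)/\gamma)$.
\item Since $\mu$ is Bernoulli, $\hat\mu(\xi)=\prod_k \phi(\xi b^{-k})$ with $|\phi(t)|<1$ unless $t\in\Z$.
\item The key estimate is then: for irrational $\gamma$, the frequencies $\xi=h(b^n-b^m)/\gamma$ have $\Omega(\log|b^n-b^m|)$ scales $k$ at which $\xi b^{-k}$ is bounded away from $\Z$. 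This gives $|\hat\mu(\xi)|\le |b^n-b^m|^{-c}$ for most pairs $(n,m)$, which is enough for the sum above to converge.
\end{enumerate}

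I expect the main obstacle to be this digit-counting step. If $\xi b^{-k}$ were close to an integer for most $k$, the base-$b$ expansion of $h(b^n-b^m)/\gamma$ would be eventually nearly constant (all zeros or all $(b-1)$s) over long stretches, and this must be excluded uniformly in $n,m$. For a fixed irrational $\gamma$ the expansion of $1/\gamma$ is not eventually periodic, but quantitative control is needed. I would try the averaged version: the bad set of pairs $(n,m)$ for which the expansion has fewer than $\epsilon(n-m)$ "non-trivial" digit transitions is sparse, because $b^{n-m}-1$ times a fixed real shifts and subtracts digit strings. A long run of trivial transitions forces $h/\gamma$ to be within $b^{-L}$ of a rational with denominator dividing $b^{j}(b^{n-m}-1)$, and this can happen for few $(n,m)$ when $\gamma$ is irrational. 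Averaging in $N$ then absorbs the exceptional pairs.

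For \eqref{eq:allpreserve}, I would choose $\mu$ to be a self-similar measure in base $3$ (for instance Cantor–Lebesgue, whose points are not $3$-normal) and require $\gamma^{-1}_*\mu$-a.e.\ $x$ to be normal in every base $b$. For bases $b$ multiplicatively independent of $3$, the Hochman–Shmerkin theorem gives that $\mu$-a.e.\ point, and its affine images, is $b$-normal. For $b$ a power of $3$, I would apply the fixed-base Fourier argument above to the irrational dilate. Intersecting these countably many full-measure sets yields an absolutely normal $x$ with $\gamma x$ not $3$-normal.
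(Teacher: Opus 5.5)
There is a genuine gap, and it is the step you flagged yourself: the key estimate in step 6 is false for Liouville-type $\gamma$. Moreover, no refinement of the Davenport--Erd\H{o}s--LeVeque route can repair it.

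\textbf{Why step 6 fails.} Take $b=2$ and $1/\gamma=\sum_k2^{-r_k}$ with $r_k$ growing very fast. Then $\{2^s/\gamma\}\le2^{1+s-r_k}$, where $r_k$ is the first index exceeding $s$. So for $\xi=h(2^n-2^m)/\gamma$, the point $\xi2^{-j}$ is far from $\Z$ only in two situations: when $n-j$ or $m-j$ lies a bounded distance below some $r_k$, or when $j$ is near $m$ or $n$. That is only $O(K+1)$ scales, with $K=\#\{k:r_k\le n\}$, and this holds for essentially \emph{every} pair $(m,n)$. The pairs violating $|\widehat\mu(\xi)|\le|2^n-2^m|^{-c}$ are therefore not sparse, and there is nothing for the averaging in $N$ to absorb. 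Your claim that $h/\gamma$ is rarely that close to rationals with denominators dividing $2^j(2^{n-m}-1)$ is exactly what fails.

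\textbf{Why the criterion itself fails.} For a binary Bernoulli measure, $|\Phi|\ge|1-2p|>0$. Also $\widehat\mu(h2^n/\gamma)=\widehat\mu(h/\gamma)\prod_{s<n}\Phi(h2^s/\gamma)$, and this equals $z_K\,(1+O(|h|2^{n-r_{K+1}}))$ for $r_K<n<r_{K+1}$, with a constant $|z_K|\ge c^K$. Hence $\int|A_N|^2\,d\nu\ge|\int A_N\,d\nu|^2\gtrsim c^{2K}$ for all $N$ between roughly $c^{-K}r_K$ and $r_{K+1}/2$. Consequently $\sum_NN^{-1}\int|A_N|^2\,d\nu=\infty$ as soon as, say, $r_{K+1}\ge r_K\exp(e^{K^2})$, whatever $p\neq1/2$ you pick.

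The conclusion is still true for such $\gamma$, but the second moments decay arbitrarily slowly. Any argument that needs a rate of Fourier decay therefore cannot cover all irrationals; the paper's Liouville heuristic points at exactly this. Your treatment of \eqref{eq:allpreserve} inherits the gap, since the bases $3^k$ go through the same fixed-base argument. The Hochman--Shmerkin part is fine.

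\textbf{What is missing: a rate-free mechanism.} The paper's argument runs as follows.
\begin{enumerate}
\item Write $b^n(\gamma Y+\eta)=\gamma A_n+b^n\eta+\gamma R_n$, where $A_n$ is the integer formed by the first $n$ digits and $R_n$ is the tail. Then $X_n=h(\gamma A_n+b^n\eta)$ mod $1$ is the Markov chain $X_{n+1}=bX_n+h\gamma D_{n+1}$ on $\T$.
\item Any stationary measure $\nu$ satisfies $\widehat\nu(k)=\widehat\nu(kb^M)\prod_{r<M}\Phi(kh\gamma b^r)$. This product tends to $0$ for every irrational $\gamma$, by the elementary fact that $\|bx\|=b\|x\|$ whenever $\|x\|<1/(2b)$. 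So Lebesgue measure is the unique stationary measure, with no quantitative input at all.
\item The strong law for bounded martingale differences gives almost sure equidistribution of $(X_n)$.
\item A finite-future martingale argument, with $R_n$ truncated to $L$ digits, transfers this to the Weyl sums of $\gamma Y+\eta$.
\end{enumerate}
For all bases simultaneously, the paper uses a binary Bernoulli measure with $p\neq1/2$. Powers of $2$ come from the base-$2$ case, and Hochman--Shmerkin handles the rest. Your Cantor-measure variant also works once the fixed-base step is done this way. You would need to adapt the product lemma, since the digit $1$ has probability $0$: $|\Phi(u)|=1$ then means $2u\in\Z$, and the same argument applied to $2\gamma$ goes through.
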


We revisit the proof of Dayan, Ganguly and Weiss in the one-dimensional
Bernoulli setting to give a streamlined and self-contained presentation
of its probabilistic argument. The proof for a fixed base uses a Markov
chain, uniqueness of its stationary measure, and a martingale argument.
The extension to independent bases uses the theorem of Hochman and
Shmerkin~\cite{HS15}, as in~\cite{DGW24}. The same argument
shows that for every irrational $\gamma$ both exceptional sets
\[
 E_{\gamma,b}:=\{x\in\calN_b:\ \gamma x\notin\calN_b\},
 \qquad
 E_\gamma:=\{x\in\calN:\ \gamma x\notin\calN\}
\]
have Hausdorff dimension $1$ and by Borel's theorem they are null sets. All proofs can be found in Section~\ref{sec:multiplication}.

The additive analogue to Theorem~\ref{thm:1} is quite different. Let us
define
\begin{align}
 \calD_b&:=\{a\in\R:\ x+a\in\calN_b
                       \text{ for all }x\in\calN_b\},\label{eq:Db}\\
 \calD&:=\{a\in\R:\ x+a\in\calN
                       \text{ for all }x\in\calN\}.\label{eq:D}
\end{align}
Rauzy's seminal contribution identifies $\calD_b$ as the set of numbers
which are completely deterministic in base $b$~\cite{Rauzy76}.
Roughly speaking, all empirical limit measures of their digit sequence
have entropy zero. We give the precise definition in
Section~\ref{sec:determinism}; see also~\cite{BD25} for a modern ergodic
treatment of Rauzy's theorem and related arithmetic questions.

\begin{theorem}\label{thm:addition}
Let $b\ge2$ and $a\in\R$. Then $a\in\calD_b$ if and only if $a$ is
completely deterministic in base $b$. Moreover,
\begin{equation}\label{eq:absoluteD}
 \calD=\bigcap_{b\ge2}\calD_b.
\end{equation}
Thus, a translation preserves absolute normality if and only if its
parameter is completely deterministic in every integer base. In both
statements, forward preservation is equivalent to preservation in both
directions:
\[
 x\in\calN_b\ \Longleftrightarrow\ x+a\in\calN_b,
 \qquad\text{respectively}\qquad
 x\in\calN\ \Longleftrightarrow\ x+a\in\calN.
\]
\end{theorem}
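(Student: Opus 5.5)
The first assertion is Rauzy's theorem~\cite{Rauzy76}, which I will quote rather than reprove. The work therefore lies in the identity \eqref{eq:absoluteD} and the two-sided preservation statements. I will handle the easy directions first, then the hard inclusion $\calD\subseteq\bigcap_b\calD_b$.

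\emph{Easy directions.} Suppose $a$ is completely deterministic in every base $b$. By Rauzy's theorem, $x+a\in\calN_b$ whenever $x\in\calN_b$, for each $b$. Intersecting over $b$ gives $x+a\in\calN$ for every $x\in\calN$, so $\bigcap_b\calD_b\subseteq\calD$.

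For two-sidedness, I will show that complete determinism in base $b$ is invariant under $a\mapsto -a$. Up to finitely many carries, the digit sequence of $-a$ is the image of that of $a$ under the letter-to-letter map $d\mapsto b-1-d$. This map sends each empirical limit measure to a measure of the same entropy, namely zero. A carry changes only an initial block, except for terminating expansions, and those are rational, hence trivially deterministic. Then $-a\in\calD_b$, and if $x+a\in\calN_b$, applying translation by $-a$ gives $x\in\calN_b$. Intersecting over bases yields the absolute version, once \eqref{eq:absoluteD} is established.

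\emph{Main step: $\calD\subseteq\calD_b$ for each fixed $b$.} I argue by contraposition. Suppose $a$ is not completely deterministic in base $b$. I want an absolutely normal $x$ with $x+a\notin\calN_b$. Rauzy's proof (or its ergodic version in~\cite{BD25}) produces some $y\in\calN_b$ with $x'=y+a\notin\calN_b$. In fact one can arrange $x'$ to have a prescribed non-uniform digit frequency along a density-one set of positions coupled to the positive-entropy part of $a$'s digits.

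I plan to build $x$ as a \emph{sparse random perturbation} of such a $y$. Choose a random real $z$ whose base-$b$ digits are i.i.d.\ uniform on a sparse set of positions $S\subset\N$ of density zero, but with $|S\cap[1,N]|\to\infty$ fast enough, say $|S\cap[1,N]|\ge N^{1/2}$ in every block. All other digits are set equal to those of $y$. Since $S$ has density zero, the base-$b$ statistics of $x+a$ agree with those of $y+a$ up to carries; carries propagate only through runs of the digit $b-1$, which occupy vanishing density. Hence $x+a\notin\calN_b$ for every realization.

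It remains to show that $x$ is almost surely normal in every base $c$. For $c\sim b$ (multiplicatively dependent bases) this follows from $x\in\calN_b$, since $x$ differs from $y$ on a density-zero set of digits. For $c\not\sim b$ I will use Weyl's criterion. It suffices to show that for each fixed $h\neq0$,
\[
\frac1N\sum_{n<N}e(hc^nx)\to0 \quad\text{a.s.}
\]
The plan is a second-moment bound, $\E\bigl|\frac1N\sum e(hc^nx)\bigr|^2\ll N^{-\delta}$. This reduces to bounding the Fourier transform of the law of $z$ at frequencies $h(c^n-c^m)$. That law is a product of sparse Bernoulli convolutions, so its Fourier transform factors as $\prod_{j\in S}|\cos$-type$(\pi h(c^n-c^m)b^{-j})|$.

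\emph{The obstacle.} The key technical input is the following: for $c\not\sim b$, an integer $k=h(c^n-c^m)$ with $|n-m|\ge N^{\epsilon}$ has many positions $j\in S\cap[1,N\log c]$ where $\|kb^{-j}\|$ is bounded away from $0$. This is a digit-spread estimate for $c^n$ in base $b$, in the style of Schmidt~\cite{Schmidt1960}. Schmidt's lemma gives many nonzero digit changes in any long window, so choosing $S$ to meet every window of length $\log^2 N$ gives decay $e^{-cN^{\eta}}$ for each off-diagonal term. This makes the variance $O(N^{-\delta})$. Borel--Cantelli along $N=\lfloor k^{2/\delta}\rfloor$, plus interpolation, then yields almost sure convergence for each $h$ and $c$. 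Taking countably many $(h,c)$ produces an $x\in\calN\setminus(\calN_b-a)$, so $a\notin\calD$.

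The delicate point is balancing the sparsity of $S$: it must be sparse enough to leave base-$b$ non-normality of $x+a$ untouched, yet dense enough in every window for Schmidt-type cancellation. Windows of length $\log^2 N$ should satisfy both requirements.
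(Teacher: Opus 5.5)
\textbf{Verdict: there is a genuine gap.} The normality of $x$ in bases multiplicatively independent of $b$ rests on an estimate you have not proved, and it does not follow from Schmidt's lemma.

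\textbf{What is fine.} Quoting Rauzy for the fixed base and the inclusion $\bigcap_b\calD_b\subseteq\calD$ are correct. Your two-sidedness argument via $a\mapsto -a$ also works: for non-terminating expansions the complement $d\mapsto b-1-d$ is exact and preserves block entropies. It is a legitimate alternative to the paper's reflection argument with $\calN_b=-\calN_b$.

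Your claim that $x+a\notin\calN_b$ for every realization is true, but not for the reason you give. Runs of $b-1$ in the non-normal number $y+a$ need not have vanishing density. The correct reason is that $w=x-y$ has nonzero digits only on the density-zero set $S$. Hence $\frac1N\sum_{n<N}\|b^nw\|\to0$, and every base-$b$ Weyl average is asymptotically unchanged.

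\textbf{The gap.} You need that for every pair $m<n<N$ with $n-m\geq N^{\epsilon}$, the integer $h(c^n-c^m)$ has $\gg N^{\eta}$ positions $j\in S$ with $\|h(c^n-c^m)b^{-j}\|$ bounded below. Schmidt's lemma does not give this. It averages over the exponent $n$, with products over \emph{consecutive} digit positions, so it says nothing about an individual pair. Many digit changes in a window of length $\log^2N$ say nothing about the digit at the one position of $S$ in that window. A pointwise bound of this strength is beyond current knowledge: known lower bounds (Stewart) for the number of nonzero base-$b$ digits of $c^n$ grow only like $\log n/\log\log n$. Saving this route would need a new Schmidt-type averaged lemma for difference frequencies and sparse position sets.

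The root cause is requiring $S$ to have density zero. The law of $z$ is then zero-dimensional and invariant under no $T_{b^r}$, so Theorem~\ref{thm:HS} is unavailable and you are pushed into hard Diophantine estimates.

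\textbf{How the paper avoids this.} Non-normality of $v=u+a$ is witnessed by a single mode, $\eta=\limsup_N|A_N^{(b)}(h;v)|>0$. So a perturbation of \emph{positive} density is harmless if it moves this one average by less than $\eta/2$. Take $z\in C_B$ with $B=b^r$ and base-$B$ digits in $\{0,1\}$. Then $\{b^nz\}\leq b^s/(B-1)$ for $n\equiv s\pmod r$. This gives $\limsup_N|A_N^{(b)}(h;v+z)-A_N^{(b)}(h;v)|\leq 2\pi|h|/(r(b-1))<\eta/2$ for every $z\in C_B$.

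The fair Bernoulli law $\nu_B$ is $T_B$-invariant, ergodic and of entropy $\log 2$. Hochman--Shmerkin therefore gives $u+Z\in\calN_c$ almost surely for every $c$ independent of $b$. Base-$b$ normality follows from Lemma~\ref{lem:weightedfuture}, since $\widehat\nu_B(k)$ multiplies $A_N^{(B)}(k;u)\to0$.
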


The fixed-base statement is Rauzy's theorem \cite{Rauzy76, BD25}. The novel part concerns~\eqref{eq:absoluteD}. One inclusion is immediate, but
the other does not follow merely by intersecting the fixed-base
statements. A counterexample in one base need not be absolutely normal.
We overcome this difficulty by adding the same sparse random number to
a normal and a nonnormal number. The perturbation makes the first number
absolutely normal almost surely, while every realization preserves the
failure of normality of the second number in the original base. The proof is given in Section~\ref{sec:determinism} after a short review on deterministic numbers and Rauzy's original result.

We next turn to normality preserving functions. It turns out that a local $C^2$ assumption excludes all non-affine functions as possible candidates.  The deterministic classes
in Theorem~\ref{thm:addition} give the complete characterization of their translation parameter.

\begin{theorem}\label{thm:2}
Let $I\subset\R$ be a nonempty open interval and let
$f\in C^2_{\mathrm{loc}}(I)$. Fix an integer $b\ge2$.
Then
\[
 f(\calN_b\cap I)\subseteq\calN_b
\]
if and only if either $f\equiv c$ with $c\in\calN_b$, or
\[
 f(x)=ax+c,\qquad a\in\Q\setminus\{0\},\quad c\in\calD_b.
\]
Similarly,
\[
 f(\calN\cap I)\subseteq\calN
\]
if and only if either $f\equiv c$ with $c\in\calN$, or
\[
 f(x)=ax+c,\qquad a\in\Q\setminus\{0\},\quad
 c\in\calD=\bigcap_{b\ge2}\calD_b.
\]
\end{theorem}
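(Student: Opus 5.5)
The "if" direction is immediate from the earlier results. A constant $c\in\calN_b$ trivially works. For $f(x)=ax+c$ with $a\in\Q\setminus\{0\}$ and $c\in\calD_b$, Wall's theorem (Theorem~\ref{thm:1}) gives $ax\in\calN_b$ for $x\in\calN_b$, and then $c\in\calD_b$ gives $ax+c\in\calN_b$. The absolute case is identical, using \eqref{eq:allpreserve} and the definition \eqref{eq:D} of $\calD$.

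For the converse, suppose $f(\calN_b\cap I)\subseteq\calN_b$. If $f'\equiv0$, then $f\equiv c$, and since $\calN_b\cap I\neq\emptyset$ we get $c\in\calN_b$. Otherwise $f'(x_0)\ne0$ at some point. The main step is to show $f''\equiv0$ on any subinterval $J$ where $f'\ne0$. Suppose instead $f''(x_1)\neq0$ for some $x_1\in J$. Near $x_1$ the map $f$ is a $C^2$ diffeomorphism with $f'$ strictly monotone. I would construct a normal $x$ near $x_1$ with $f(x)\notin\calN_b$. The strategy is to run the random construction from Section~\ref{sec:multiplication} locally. Take a Bernoulli-type random $x$ whose digits are free except on a sparse set of blocks. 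There, digits are chosen adaptively to force the local block structure of $f(x)$.

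Concretely, at scale $b^{-n}$ the map $f$ is approximately affine: $f(x+t)\approx f(x)+f'(x)t$ with error $O(t^2)$. So the digits of $f(x)$ at positions $n$ to $2n-C$ are determined, up to carries, by multiplying a digit block of $x$ by $f'(x)$. The factor $f'(x)$ varies, and $C^2$ with $f''\neq0$ means $f'$ sweeps through a genuine interval of slopes as $x$ varies at scale $b^{-n/2}$ or coarser. Two routes are available. One is to pick $x$ adaptively so that the effective slope at each scale is close to a rational with small denominator, forcing periodic structure in the image digits. The other, which I would try first, is to use Theorem~\ref{thm:1}'s exceptional-set argument uniformly over slopes. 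For irrational $\gamma$ there exist normal $x$ with $\gamma x$ nonnormal, and the Markov chain/martingale argument should be robust enough to handle a slowly varying $\gamma$. Since $f'$ is nonconstant and continuous, it takes irrational values, and for $C^2$ the linearization error over windows of length about $n/2$ digits is negligible. The obstacle is gluing the local affine approximations across scales while keeping $x$ normal and retaining a positive-density bias in the digits of $f(x)$. I expect this to be the hardest step, and it is exactly where $C^2$ (rather than $C^{1,1}$) must enter, presumably through a quantitative control on how $f'$ varies, which makes the "slope" converge along the construction.

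Once $f''\equiv0$ on each such subinterval, $f$ is affine, $f(x)=ax+c$, on each component of $\{f'\neq0\}$. Continuity of $f'$ then forces $a\neq0$ constant on all of $I$, so $f$ is affine on $I$. Now $\gamma=a$ must preserve normality. Theorem~\ref{thm:1} is global, but its proof via exceptional sets of full dimension localizes to any interval, for instance by rescaling by $b^k$ and translating by rationals. Hence $a\in\Q\setminus\{0\}$. Then for $x\in\calN_b\cap I$, $ax\in\calN_b$ ranges over $\calN_b\cap aI$, and $ax+c\in\calN_b$. Rauzy's theorem (Theorem~\ref{thm:addition}) requires preservation for all normal numbers, but it can be localized the same way, since normality is invariant under multiplying by $b$ and adding integers. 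So $c\in\calD_b$. The absolute case runs in parallel, using \eqref{eq:allpreserve} and \eqref{eq:absoluteD}, with the local construction done via the Hochman--Shmerkin extension.
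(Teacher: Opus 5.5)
The ``if'' direction, the constant case, and the passage from ``$f''\equiv0$ wherever $f'\neq0$'' to global affinity are fine. The central step, however, is not proved: that a non-affine $f$ cannot preserve normality. You sketch two routes and carry out neither, and you concede that the gluing across scales is unresolved. That step is the heart of the theorem.

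Your diagnosis of where $C^2$ enters is also off. If $f'$ is merely Lipschitz, the linearization error $f(x+t)-f(x)-f'(x)t$ is still $O(t^2)$, and the slopes converge just as fast. So no such quantitative control separates $C^2$ from $C^{1,1}$. What $C^2$ really gives is an open interval on which $f''$ has a fixed sign, so that $f'$ is strictly monotone there. Compare the inverse $g$ of the map in Theorem~\ref{thm:C11}. It has $g''\neq0$ almost everywhere on a compact set $K\subset\calN$ of positive measure, yet it is rational-affine on every gap of $K$. Any Bernoulli measure carried by non-normal numbers gives $K$ no mass.

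The missing idea is to exchange the roles of input and output. Take $J\subset I$ with $f'f''\neq0$ on $J$, rationals $r$ and $s>0$ with $[r,r+s]\subset f(J)$, and let $g$ be the inverse of $f|_J$. For $Y\sim\mu_{b,\mathbf p}$ with non-uniform $\mathbf p$, the output $r+sY$ is almost surely non-normal by Wall's theorem. The corresponding input is $F(Y)$ with $F(t)=g(r+st)$, and $F''\neq0$ because $g''=-f''(g)/f'(g)^3$. It then suffices to show that $F(Y)$ is almost surely normal.

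The paper obtains this from the Baker--Banaji Fourier decay of $F_*\mu_{b,\mathbf p}$ (Theorem~\ref{thm:BB}) together with the second-moment argument of Lemma~\ref{lem:decaynormal}. This yields absolute normality, so it covers both halves of the statement at once.

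Your ``slowly varying $\gamma$'' idea can in fact be realized in this inverted form for a fixed base. With $y_n=\sum_{j\le n}D_jb^{-j}$ one has $b^nF(Y)=b^nF(y_n)+F'(y_n)R_n+O(b^{-n})$ and $b^{n+1}F(y_{n+1})=b\cdot b^nF(y_n)+F'(y_n)D_{n+1}+O(b^{-n})$. So the martingale arguments of Lemma~\ref{lem:stationary} and Proposition~\ref{prop:irrational} go through with the adapted parameter $hF'(y_n)\to hF'(Y)$. That limit is almost surely irrational, since $F'$ is strictly monotone and $\mu_{b,\mathbf p}$ is non-atomic. For $\calN$ one adds Theorem~\ref{thm:HS} for bases independent of $b$. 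None of this is in your proposal.

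There is also a smaller gap in the affine step. While $c$ is still unknown, you cannot reduce to ``$x\mapsto ax$ preserves normality'', because nothing keeps $Y+c$ non-normal. You need Proposition~\ref{prop:irrational} with the shift $\eta=(r-c)/a$, applied to the witness $X=(r+sY-c)/a$, where $[r,r+s]\subset f(I)$ guarantees $X\in I$. Likewise, to conclude $c\in\calD_b$ you must localize with rational translations, $x=y/a+q$ with $q\in\Q$. Rescaling by $b^k$ would turn $c$ into $cb^{-k}$.
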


The proof uses the nonvanishing-second-derivative case of the
pushforward theorem of Baker and Banaji~\cite{BB25}. If $f$ is not
affine-linear, we restrict to an interval on which its first and second
derivatives do not vanish. Its local inverse then sends almost every
point of a suitable nonnormal Bernoulli measure to an absolutely normal
number. This contradicts preservation. The affine case follows from the
probabilistic statement used in Theorem~\ref{thm:1} and the additive
characterization. The proof is carried out in Section~\ref{sec:C2}.

Very interestingly, the conclusion changes as soon as the $C^2$-assumption is slightly weakened. Indeed, Theorem~\ref{thm:2} fails for $C^{1,1}$-functions, i.e. continuously
differentiable functions with a Lipschitz derivative.

\begin{theorem}\label{thm:C11}
There is a non-affine increasing $C^{1,1}$ diffeomorphism
$f:\R\to\R$ such that
\begin{equation}\label{eq:C11preserve}
 f(\calN_b)\subseteq\calN_b\quad\text{for every integer }b\ge2.
\end{equation}
In particular, the same map preserves absolute normality. Both $f$ and
$f^{-1}$ have globally Lipschitz derivatives.
\end{theorem}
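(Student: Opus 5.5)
The plan is to take $f(x)=x+g(x)$, where $g$ is a small $C^{1,1}$ perturbation built at a very sparse sequence of scales, and to prove \eqref{eq:C11preserve} with Weyl's criterion in each base separately. Two observations show why $g$ must depend on $x$ in a non-trivial way, and why $C^{1,1}$ (not $C^2$) is the right class:
\begin{itemize}
\item $g$ cannot be a continuous function with values in $\calD$: a non-constant Lipschitz $g$ maps the full-measure set $\calN$ onto a set of positive measure, while $\calD$ is null.
\item Any $C^2$ choice is excluded by Theorem~\ref{thm:2}.
\end{itemize}
So the non-affinity has to live in the second derivative. It should be concentrated on a set so thin in "digit time" that it is invisible to the ergodic averages defining normality.

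Concretely, fix a $1$-periodic, piecewise quadratic, $C^{1,1}$ profile $\rho$ with $\|\rho''\|_\infty\le1$. Set
\[
 g(x)=\sum_{k\ge1}a_k\,N_k^{-2}\rho(N_kx),
\]
with $\sum_k a_k<\infty$ and a super-exponentially growing sequence $N_k$, for instance $N_{k+1}\ge \exp(N_k)$ with $N_k$ divisible by $k!$. Then $g''$ is bounded, so $g'$ is globally Lipschitz. Taking $\sum a_k$ small makes $f'\in[\tfrac12,\tfrac32]$, so $f$ is an increasing diffeomorphism. Since $(f^{-1})''=-f''/(f')^3\circ f^{-1}$ is bounded, $f^{-1}$ also has a Lipschitz derivative. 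Non-affinity is clear as soon as one $a_k\neq0$.

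For preservation I will fix $b$ and a non-zero integer $h$, and estimate
\[
 \frac1n\sum_{j<n}e\bigl(hb^j(x+g(x))\bigr).
\]
I split $g$ according to the position $j$.
\begin{itemize}
\item \emph{Tail.} The terms with $N_k^2\gg b^{j}$ contribute only $O(b^jN_k^{-2})$ to the phase, which is summable and negligible.
\item \emph{Head.} The terms with $N_k^2\le b^j$ form the finite partial sum $g_K(x)$, with $K=K(j)$ changing only at the sparse times $j\approx 2\log_bN_k$.
\end{itemize}
On each long stretch where $K$ is fixed, $x+g_K(x)$ is one fixed real number $y_K$. The task on that stretch is then to show that the digits of $y_K$ at positions far beyond $2\log_b N_K$ agree with those of $x$ shifted by a bounded-complexity, eventually periodic block. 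Here the divisibility of $N_k$ and the piecewise quadratic rational structure of $\rho$ are to be used, in the spirit of Wall's theorem and of the sparse-perturbation argument behind Theorem~\ref{thm:addition}. If this holds, the change of $K$ happens at a density-zero set of positions, and carry chains in a normal $x$ have geometrically distributed lengths. Hence the exceptional positions have density zero and the Weyl sums tend to $0$, for every $b$ simultaneously.

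The main obstacle is exactly the last step. The value $\rho(N_kx)$ is an arbitrary real number, so the term $N_k^{-2}\rho(N_kx)$ a priori perturbs all digits past position $2\log_bN_k$, not a sparse set of them. I expect to need a more arithmetic choice of profile to control this. The first thing I would try is to choose $\rho$ so that, on each affine-in-$N_kx$ piece, $f$ acts as $x\mapsto (p/q)x+r$ with $p/q$ rational and $r$ a sparse Liouville-type constant. Such constants are completely deterministic in every base, hence lie in $\calD$ by Theorem~\ref{thm:addition}. The quadratic transition zones must then have total measure fitting inside a set that every normal $x$ visits at the relevant scales only with frequency zero. I would try to arrange this via $N_{k+1}\gg N_k$, so that for each $x$ only finitely many transition zones are hit in a density-one set of digit positions.
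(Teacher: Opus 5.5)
Your regularity bookkeeping is fine. The proposal has a genuine gap exactly where you locate it, and the reduction you sketch for closing it cannot work.

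For fixed $K$, the map $x\mapsto x+g_K(x)$ is $C^2$ with non-vanishing second derivative on each of its quadratic pieces. So the pull-back argument in the proof of Theorem~\ref{thm:2} (Theorem~\ref{thm:BB} plus Lemma~\ref{lem:decaynormal}) yields absolutely normal $x$ for which $y_K=r_0+sY$, with $r_0,s$ rational and $Y$ having i.i.d.\ biased digits. Nothing ties the digits of $y_K$ to those of $x$ up to an eventually periodic block.

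Nor can you keep normal inputs away from the curvature. If $f'$ is Lipschitz and $f$ is not affine, then $f''\neq0$ on a set of positive Lebesgue measure, and almost every point of that set is absolutely normal. So ``each normal $x$ meets only finitely many transition zones'' is at best an almost-everywhere Borel--Cantelli statement. By quasi-independence of super-exponentially separated scales, it even fails for almost every $x$ unless the measures $\tau_k$ of the scale-$k$ transition zones are summable.

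In the summable case, your arithmetic remedy works against itself.
\begin{itemize}
\item If $\rho$ has nonzero slope $\alpha$ off the transition zones, the intercept of the scale-$k$ piece through $x$ is $a_k(\beta-\alpha\lfloor N_kx\rfloor)N_k^{-2}$. Summed over $k$, these intercepts contain $\sum_ka_k\alpha\{N_kx\}N_k^{-2}$, which carries the tails of $x$, so they are not a sparse constant. Hence $\rho$ must be constant, with rational plateaus, off the transition zones.
\item Fix an interval $I_0$ on which $\{N_1x\}$ stays in one quadratic piece of $\rho$. Let $E\subset I_0$ be the set of points lying on plateaus at every scale $k\geq2$. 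On $E$ you get $f=P+r$, where $P$ is a fixed quadratic polynomial with $P''\neq0$ and $r(x)=\sum_{k\geq2}a_kc_k(x)N_k^{-2}$ is the sparse constant you want in $\calD$.
\item Pull $\mu_{b,\mathbf p}$ back through $P$ as in the proof of Theorem~\ref{thm:2}. The resulting measure $\nu$ has polynomial Fourier decay. For super-exponentially growing $N_k$, this makes the conditions that $\{N_kx\}$ avoid the transition zones asymptotically independent under $\nu$, so $\nu(E)>0$.
\item This gives absolutely normal $x\in E$ with $P(x)\notin\calN_b$, hence $f(x)\notin\calN_b$ whenever $r(x)\in\calD_b$.
\end{itemize}
That membership is itself unjustified: a Liouville-type number deterministic in one base can have positive entropy in another, e.g.\ $\sum_k3^{-n_k}$ in base $2$ for very sparse $n_k$.

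The missing idea is the paper's: build the inverse first, and put the curvature on normal numbers. Take a compact $K\subset\calN$ of positive measure with $K=\supp(\lambda|_K)$, and let $g(x)=x+\int_K(x-t)_+w(t)\,dt$. Choose $w\in[\tfrac12,\tfrac32]$ by localized two-moment corrections (Lemma~\ref{lem:rationaljets}) so that $g$ is rational-affine on every gap of $K$. Every non-normal point lies in a gap, so by Wall's theorem $g$ preserves non-normality in every base, and $f=g^{-1}$ preserves normality. The curvature of $f$ lives on $g(K)$, which $f$ maps into $K\subset\calN$. No Weyl-sum analysis is needed.
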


Our construction uses a compact set of absolutely normal numbers of
positive measure. We first construct a non-affine diffeomorphism $g$
which is rational-affine on every complementary interval of this set.
Such a map preserves nonnormality in every base. Its inverse is the
required normality preserving map. The main point is to arrange two
rational moments at each complementary interval while keeping the
second derivative bounded. This is done by small, localized
corrections. The construction can be found in Section~\ref{sec:C11}.

\section{Multiplicative operations}\label{sec:multiplication}

We fix some notation. We write $\T:=\R/\Z$ for the unit torus and
$\|u\|:=\dist(u,\Z)$ for the corresponding lattice distance. We abbreviate
the Fourier modes by $e(t):=\exp(2\pi i t)$. For a probability measure
$\mu$, let
\[
 \widehat\mu(t):=\int e(ty)\,d\mu(y).
\]
Wall's theorem will be used in its usual rational-affine form:
\begin{equation}\label{eq:Wall}
 x\in\calN_b\quad\Longleftrightarrow\quad rx+s\in\calN_b
 \qquad(r\in\Q\setminus\{0\},\ s\in\Q).
\end{equation}
The same equivalence holds for $\calN$. We also use the equivalence
$\calN_b=\calN_{b^r}$ for positive integers $r$; see~\cite{Wall49,Bug12}.
By Weyl's criterion~\cite{Weyl1916}, $x\in\calN_b$ if and only if
\begin{equation}\label{eq:Weyl}
 A_N^{(b)}(h;x):=\frac1N\sum_{n=0}^{N-1}e(hb^nx)\longrightarrow0
 \quad\text{for every }h\in\Z\setminus\{0\}.
\end{equation}

\subsection{Proof of Theorem~\ref{thm:1}: the \texorpdfstring{$b$}{b}-normal case}
We introduce the probabilistic model which we use to derive our main results. We fix throughout this section an integer base $b \geq 2$. Let 
\begin{equation}\label{eq:p}
\mathbf p=(p_0,\dots,p_{b-1}),
\qquad p_d>0,\qquad \sum_{d=0}^{b-1}p_d=1,
\end{equation}
be a probability vector. We further suppose that $\mathbf p \neq \left(\frac1b, \ldots, \frac1b \right)$ is not the constant vector.
Given such a vector $\mathbf p$, let $D_1, D_2, \ldots$ be i.i.d. random variables with law
\begin{equation}\label{eq:Digit}
\Prob(D_j=d)=p_d.
\end{equation}
Our probabilistic model concerns the random number
\begin{equation}\label{eq:Y}
Y :=\sum_{j=1}^{\infty}D_j b^{-j}
\end{equation}
and since $\mathbf p$ is not uniform, $Y$ is almost surely not $b$-normal. The law $\mu_{b,\mathbf p}$ of $Y$ is a probability measure on $[0,1]$ with Fourier transform
\begin{equation}\label{eq:fourier-product}
\widehat\mu_{b,\mathbf p}(t)
=\int e(t y)\,d\mu_{b,\mathbf p}(y)
=\prod_{j=1}^{\infty}\Phi(t b^{-j}),
\qquad
\Phi(u) :=\sum_{d=0}^{b-1}p_d e(d u).
\end{equation}
The infinite product identity follows from the independence of the digits.
We further remark that 
\begin{equation}\label{eq:phi-one}
|\Phi(u)|=1
\quad\Longleftrightarrow\quad
u\in\mathbb Z 
\end{equation}
since $p_d>0$ for every $d = 0, 1, \ldots, b-1.$
The key probabilistic statement is the following special case of \cite[Theorem~4]{DGW24}. We revisit its proof in this Bernoulli setting.

\begin{prop}\label{prop:irrational}
Let $b\ge 2$, let $\mathbf p$ be as above, and let $\gamma\notin\Q$. Then, for every $\eta\in\R$,
\begin{equation}\label{eq:key}
\mathbb{P} \left(\gamma Y + \eta \text{ is normal to base }b \right) = 1.
\end{equation}
In other words, $\gamma Y + \eta$ is $b$-normal for $\mu_{b,\mathbf p}$-almost every $Y$.
\end{prop}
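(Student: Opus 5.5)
The approach is to avoid Fourier decay estimates entirely. Instead I would use a Markov chain on the torus that generates the sequence $b^n\gamma Y \bmod 1$ digit by digit, show that this chain has Lebesgue measure as its unique stationary measure, and conclude with an almost-sure ergodic theorem (Breiman type, proved by a martingale argument) together with Weyl's criterion~\eqref{eq:Weyl}.

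\textbf{Step 1: the chain.} Write, modulo $1$,
\[
 b^n(\gamma Y+\eta)\equiv Z_n+\gamma\sum_{l\ge1}D_{n+l}b^{-l}+b^n\eta,
 \qquad Z_n:=\gamma\sum_{j\le n}D_jb^{n-j}\ \ (\mathrm{mod}\ 1).
\]
This satisfies $Z_{n+1}=bZ_n+\gamma D_{n+1}$ on $\T$, so $Z_n$ is a Feller Markov chain on $\T$ with i.i.d.\ innovations. The term $b^n\eta$ is deterministic and does not fit this scheme directly.

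To absorb it, I would run the chain on $\T^2$ with the second coordinate $b^n\eta \bmod 1$. A cleaner alternative is to replace $Z_n$ by $Z_n' := Z_n+b^n\eta$. This also satisfies $Z'_{n+1}=bZ'_n+\gamma D_{n+1}$, only with initial state $\eta$ instead of $0$. The point is that the ergodic statement below holds for every starting point, so the shift by $\eta$ costs nothing.

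Fix $L\ge1$ and let
\[
 W_n:=(Z'_n,D_{n+1},\dots,D_{n+L})\in\T\times\{0,\dots,b-1\}^L .
\]
This is again a Feller Markov chain. Truncating the tail of the digit series gives, for each $h\neq0$,
\[
 \bigl|e(hb^n(\gamma Y+\eta))-F(W_n)\bigr|\le 2\pi|h\gamma|b^{-L},
 \qquad F(z,w):=e\Bigl(h\bigl(z+\gamma\textstyle\sum_{l}w_lb^{-l}\bigr)\Bigr).
\]

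\textbf{Step 2: uniqueness of the stationary measure.} Let $\nu$ be stationary for $Z'$. Stationarity gives $\widehat\nu(h)=\widehat\nu(bh)\,\Phi(h\gamma)$ for integers $h$, and iterating,
\[
 |\widehat\nu(h)|\le\prod_{i=0}^{k-1}|\Phi(b^ih\gamma)|\quad\text{for all }k .
\]
By~\eqref{eq:phi-one} and compactness, $1-|\Phi(u)|\ge c\|u\|^2$ for some $c>0$. So the product tends to $0$ unless $\sum_i\|b^ih\gamma\|^2<\infty$.

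That summability is impossible for the irrational number $x=h\gamma$. Indeed, it forces $\|b^ix\|<1/(2b)$ for all $i\ge i_0$. Write $b^ix=m_i+\epsilon_i$ with $m_i$ the nearest integer and $|\epsilon_i|<1/(2b)$. Then $b\epsilon_i=\epsilon_{i+1}+(m_{i+1}-bm_i)$, and the bracket is an integer of absolute value less than $1$, hence $0$. So $\epsilon_{i+1}=b\epsilon_i$, which forces $\epsilon_{i_0}=0$ and makes $x$ rational, a contradiction.

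Hence $\nu$ is Lebesgue measure $\lambda$. For $W_n$, a stationary measure must have the product law $\mathbf p^{\otimes L}$ in the digit coordinates. The $\T$-coordinate is independent of those future digits, and its marginal is stationary for $Z'$. Therefore $\lambda\otimes\mathbf p^{\otimes L}$ is the unique stationary measure of $W_n$.

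\textbf{Step 3: the almost-sure ergodic theorem.} For a Feller chain on a compact space with a unique stationary measure $\pi$, I would show that for every continuous $F$ and every initial state, $\frac1N\sum_{n<N}F(W_n)\to\int F\,d\pi$ almost surely. The argument has three parts.

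\begin{itemize}
 \item The sums $\sum_n\bigl(G(W_{n+1})-PG(W_n)\bigr)/N$ tend to $0$ almost surely for bounded $G$, by the martingale strong law with bounded increments. Here $P$ is the transition operator.
 \item Consequently, along any subsequence, every weak limit of the empirical measures is $P$-invariant. This uses a countable dense family of $G$ together with the Feller property.
 \item By uniqueness, that limit is $\pi$.
\end{itemize}

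Applied to $F$ from Step 1, Fubini gives $\int F\,d\pi=0$, since $\int e(hz)\,dz=0$. Hence $\limsup_N|A^{(b)}_N(h;\gamma Y+\eta)|\le2\pi|h\gamma|b^{-L}$ almost surely. Letting $L\to\infty$ along the integers and $h$ range over $\Z\setminus\{0\}$ (both countable), Weyl's criterion~\eqref{eq:Weyl} yields~\eqref{eq:key}.

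\textbf{Main obstacle.} I expect the crux to be Step 2: uniqueness hinges on the nonsummability of $\|b^ih\gamma\|^2$ for irrational arguments. The second delicate point is making Step 3 hold for an arbitrary starting state rather than only for $\pi$-almost every one. Uniqueness of $\pi$ combined with the martingale argument is exactly what provides this uniformity, and it is what allows the deterministic shift $\eta$ to be absorbed into the initial condition.
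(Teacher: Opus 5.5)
Your proposal is correct and follows essentially the paper's proof: the same torus chain started at $\eta$, the same Fourier iteration showing that the uniform measure is the unique stationary law (your quantitative bound $1-|\Phi(u)|\ge c\|u\|^2$ is a harmless strengthening of Lemma~\ref{lem:product-zero}), the same martingale argument valid for every starting point, and the same truncation followed by Weyl's criterion. The one real variation is the finite-future step: you enlarge the state space to $W_n=(Z'_n,D_{n+1},\dots,D_{n+L})$ and identify its unique stationary law as $\lambda\otimes\mathbf p^{\otimes L}$, which is correct because after $L$ steps the digit block is fresh and independent of the torus coordinate, whereas the paper stays on $\T$ and handles the weights $g(D_{n+1},\dots,D_{n+L})$ by centering and a martingale argument along residue classes modulo $L+1$.
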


For comparison with \cite{DGW24}, the law of $\gamma Y+\eta$ is the Bernoulli measure for the maps
    \[
      x\longmapsto b^{-1}x+b^{-1}\gamma d+(1-b^{-1})\eta,
      \qquad d=0,\ldots,b-1.
    \]
    Two translation parts differ by $\gamma/b$, which is irrational.
    
    Let us first quickly show how \eqref{eq:bpreserve} follows from Proposition~\ref{prop:irrational}.
\begin{proof}[Proof of Theorem~\ref{thm:1}: the $b$-normal case]
Let us write 
$$ \mathcal{P}_b := \{\gamma \in \R \, | \, \gamma x \in \mathcal{N}_b \text{ for all } x \in \mathcal{N}_b \}.$$
The inclusion $\Q\setminus\{0\} \subset \mathcal{P}_b$ follows from  Wall's theorem. It is clear that $0 \notin \mathcal{P}_b$. Let $\gamma \in \R \setminus \Q$ and it remains to show that $\gamma \notin \mathcal{P}_b$. 

 Choose any non-uniform
probability vector $\mathbf p$ as in \eqref{eq:p} and let $Y$ be the random number from \eqref{eq:Y} with  law
$\mu_{b,\mathbf p}$. We apply Proposition~\ref{prop:irrational} with $\gamma^{-1}$ and $\eta = 0$. Then $Y$ is not $b$-normal for
$\mu_{b,\mathbf p}$-almost every $Y$, but  
$\gamma^{-1} Y$ is $b$-normal for $\mu_{b,\mathbf p}$-almost every $Y$.
In particular, there exists a number $y$ with both properties. But then $\gamma^{-1} y$ is $b$-normal and $y$ is not, so $\gamma \notin \mathcal{P}_b$.
\end{proof}

At first glance, Proposition~\ref{prop:irrational} seems to be obviously wrong. Fixing a vector $\mathbf p$, one is tempted to think that a Liouville-type number $\gamma$ leads to a counterexample as its nonzero digits are so rare that it can never ensure enough mixing. Let us consider the special case $b = 2$ and let $\gamma = \sum_{k = 1}^{\infty} 2^{-r_k}$, where we think of $r_k$ being a very fast growing sequence, say $k!$. Then,
$$ \gamma Y = \sum_{n = 1}^{\infty} \sum_{k, \, r_k < n} D_{n-r_k} 2^{-n}.$$
Let us first pretend that the digit at position $n$ is solely determined by the $n$-level term $\sum_{k, \, r_k < n} D_{n-r_k} 2^{-n}$. Then, the distribution of the digit $n$ would coincide with the parity of the binomial random variable $\sum_{k, \, r_k < n} D_{n-r_k}$. Even if $r_k$ is growing fast, ultimately there will be many independent terms and this probability converges to $\frac12$. Of course, this picture is too naive as there is a further carry interaction between the different terms. The heuristics is as follows. If $\gamma$ is rational and, thus, its digits are eventually periodic, there is a high resonance between the level sets resembling a periodic crystal. This destroys the above parity reasoning. On the other hand, if $\gamma$ is irrational, these resonances are not present due to the aperiodicity. So even Liouville-type numbers are not valid counterexamples to Proposition~\ref{prop:irrational}. A direct digit-by-digit rigorous justification of the above sketched heuristics is not feasible and we employ instead Weyl's criterion. But the above ideas are still visible in our proof which occupies the rest of this section.

We start with the following simple observation which encodes the absence of resonances for irrational numbers.
\begin{lemma}\label{lem:product-zero}
If $\gamma\notin\Q$, then
\begin{equation}
\lim_{M \to \infty} \prod_{r=0}^{M-1}|\Phi(\gamma b^r)| = 0.
\end{equation}
\end{lemma}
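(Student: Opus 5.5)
We need to show $\prod_{r<M}|\Phi(\gamma b^r)|\to0$ for irrational $\gamma$. Since each factor lies in $[0,1]$, the partial products are nonincreasing, so the limit exists. It suffices to show that infinitely many factors are bounded above by a constant $c<1$.

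By \eqref{eq:phi-one} and continuity of $\Phi$ on the compact set $\{u:\|u\|\ge\delta\}$ modulo $\Z$, for every $\delta>0$ there is $c(\delta)<1$ with $|\Phi(u)|\le c(\delta)$ whenever $\|u\|\ge\delta$. Here we use that $\Phi$ is $1$-periodic. So the task reduces to showing that $\|\gamma b^r\|\ge\delta$ for infinitely many $r$, for some fixed $\delta>0$.

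We argue by contradiction with $\delta:=1/(2b+2)$, say. Suppose $\|\gamma b^r\|<\delta$ for all $r\ge r_0$. Write $\gamma b^r=m_r+\epsilon_r$ with $m_r\in\Z$ and $|\epsilon_r|<\delta$. Then
\[
 b\gamma b^r=bm_r+b\epsilon_r,
\]
and $|b\epsilon_r|<b\delta<1/2$. Hence $m_{r+1}=bm_r$ is the nearest integer and $\epsilon_{r+1}=b\epsilon_r$. Iterating gives $\epsilon_{r_0+k}=b^k\epsilon_{r_0}$, which must stay below $\delta$ in absolute value for all $k$. This forces $\epsilon_{r_0}=0$, i.e.\ $\gamma b^{r_0}\in\Z$, so $\gamma\in\Q$, a contradiction.

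Therefore $\|\gamma b^r\|\ge\delta$ for infinitely many $r$. Each such $r$ contributes a factor at most $c(\delta)<1$, and the product tends to $0$.

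The only delicate point is the nearest-integer propagation step. It requires $b\delta<1/2$, so that $m_{r+1}$ is uniquely identified as $bm_r$; the choice of $\delta$ above guarantees this.
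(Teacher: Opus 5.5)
Your proof is correct and follows essentially the same route as the paper: both rule out $\|\gamma b^r\|$ staying below a threshold of order $1/(2b)$ for all large $r$, since $\|bx\|=b\|x\|$ for small $\|x\|$ would force $\gamma b^{r_0}\in\Z$. The only difference is the packaging. You bound infinitely many factors uniformly by $c(\delta)<1$, whereas the paper argues that a non-vanishing product forces $|\Phi(\gamma b^r)|\to1$ and hence $\|\gamma b^r\|\to0$.
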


\begin{proof}
Note that the factors have modulus at most $1$. Suppose the product does not tend to $0$. This is only possible if 
$|\Phi(\gamma b^r)|\to 1$. By \eqref{eq:phi-one}, continuity of $\Phi$ and compactness modulo one, this implies
$$ \|\gamma b^r\|\longrightarrow 0,$$
where $\|\cdot\|$ is the lattice distance. In particular, there is a natural number $r_0$ such that for all $r \geq r_0$  we have $\|\gamma b^r\|<1/(2b)$. However if $\|x \| < \frac{1}{2b}$, then $\|b x \| = b \,\|x\|$.  Iterating this identity gives
 $\|\gamma b^{r_0+s}\|=b^s\|\gamma b^{r_0}\|$ for every $s\geq 0$. This is only possible if $\|\gamma b^{r_0}\|=0$. This forces $\gamma\in b^{-r_0}\mathbb Z\subset\Q$, and this is a contradiction.
\end{proof}

The main idea for the proof of Proposition~\ref{prop:irrational} is a Markov chain argument. We want to identify the equidistribution of the digits as unique stationary measure of the digit process corresponding to $\gamma Y$.
Consider the Markov chain on $\T$ defined by
\begin{equation}\label{eq:chain}
X_0=x_0, \qquad X_{n+1}=bX_n+\gamma D_{n+1}\pmod 1.	
\end{equation}
The Markov chain is uniquely characterized by its transition operator $P: C(\T) \to C(\T)$,
\begin{equation}\label{eq:transition}
(Pf)(x) :=\sum_{d=0}^{b-1}p_d f(bx+\gamma d).
\end{equation}
 acting on the space of continuous functions $C(\T)$. Recall that a stationary measure $\mu$ for a Markov chain is characterized by the identity
 $$ \int (Pf)(x) d \mu(x) = \int f(x) d \mu(x) $$
 for all bounded continuous functions $f$. 
 
    We shall use the strong law for bounded martingale differences. To keep the probabilistic argument self-contained, let us recall its short proof. Suppose $(M_n)$ is a sequence of martingale differences with $|M_n|\leq C$, and write $S_N=\sum_{n=1}^NM_n$. For $m<n$, conditional expectation gives $\E(M_n\overline{M_m})=0$. Thus,
    \[
      \E|S_N|^2=\sum_{n=1}^N\E|M_n|^2\leq C^2N.
    \]
    Given $\varepsilon>0$, Chebyshev's inequality shows that
    \[
      \sum_{j=1}^{\infty}\Prob\bigl(|S_{j^2}|>\varepsilon j^2\bigr)
      \leq \frac{C^2}{\varepsilon^2}\sum_{j=1}^{\infty}\frac1{j^2}<\infty.
    \]
    By Borel--Cantelli, and then a countable intersection over $\varepsilon=1,1/2,1/3,\ldots$, we obtain $S_{j^2}/j^2\to0$ almost surely. To pass from squares to all $N$, choose $j$ with $j^2\leq N<(j+1)^2$. Then,
    \begin{equation}\label{eq:martingalegaps}
      \left|\frac{S_N}{N}-\frac{S_{j^2}}{j^2}\right|
      \leq 2C\frac{N-j^2}{N}
      \leq 2C\frac{2j+1}{j^2}\longrightarrow0.
    \end{equation}
    This proves the asserted strong law; see also \cite{Chow67}.

    \begin{lemma}\label{lem:stationary}
 	If $\gamma\notin\Q$, then the uniform measure $m$ on $\T$ is the unique stationary measure for the chain \eqref{eq:chain}. Moreover, for every starting point
 	$x_0\in\T$ and for $\mu_{b,\mathbf p}$-almost every digit sequence $(D_j)$,
 	\begin{equation}\label{eq:equilibrium}
 	\frac1N\sum_{n=0}^{N-1} f(X_n)\longrightarrow \int_{\T}f\,dm
 	\qquad(f\in C(\T)).
 	\end{equation}
 \end{lemma}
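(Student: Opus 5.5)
We first prove uniqueness of the stationary measure with Fourier analysis, and then obtain the almost sure ergodic averages \eqref{eq:equilibrium} from the strong law for bounded martingale differences recalled above.

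\emph{Uniqueness.} Let $\mu$ be stationary and fix a frequency $h\in\Z\setminus\{0\}$. Testing stationarity against $f(x)=e(hx)$ gives $(Pf)(x)=e(hbx)\Phi(h\gamma)$. Hence
\[
\widehat\mu(h)=\Phi(h\gamma)\,\widehat\mu(hb).
\]
Iterating this $M$ times yields
\[
|\widehat\mu(h)|\le\prod_{r=0}^{M-1}|\Phi(h\gamma b^r)|.
\]
Since $h\gamma\notin\Q$, Lemma~\ref{lem:product-zero} applied to $h\gamma$ shows that the right-hand side tends to $0$ as $M\to\infty$. So $\widehat\mu(h)=0$ for all $h\ne0$, and therefore $\mu=m$. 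One checks directly that $m$ is stationary: $x\mapsto bx+\gamma d$ preserves $m$ on $\T$.

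\emph{Ergodic averages.} By density of trigonometric polynomials in $C(\T)$ and a countable intersection over $h$, it suffices to prove $\frac1N\sum_{n<N}e(hX_n)\to0$ almost surely for each fixed $h\ne0$. The plan is to avoid a general Krylov--Bogolyubov argument and exploit the explicit structure of the chain. Write $f_h(x)=e(hx)$. Let $\mathcal F_n=\sigma(D_1,\dots,D_n)$, so that $\E[f(X_{n+1})\mid\mathcal F_n]=(Pf)(X_n)$. Then
\[
M_{n+1}:=f_h(X_{n+1})-\Phi(h\gamma)f_{hb}(X_n)
\]
are martingale differences bounded by $2$. The strong law gives
\[
\frac1N\sum_{n<N}f_h(X_{n+1})-\Phi(h\gamma)\frac1N\sum_{n<N}f_{hb}(X_n)\to0
\]
almost surely. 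Shifting the index by one costs $O(1/N)$. Applying the same argument to the frequencies $hb,hb^2,\dots,hb^{M-1}$ (countably many events) and chaining the estimates gives, almost surely,
\[
\limsup_N\Bigl|\frac1N\sum_{n<N}f_h(X_n)\Bigr|\le\prod_{r=0}^{M-1}|\Phi(h\gamma b^r)|\cdot\sup_N\Bigl|\frac1N\sum_{n<N}f_{hb^M}(X_n)\Bigr|,
\]
up to accumulated $o(1)$ terms. The last supremum is at most $1$. Letting $M\to\infty$ and using Lemma~\ref{lem:product-zero} again gives the limit $0$. This argument works for every starting point $x_0$.

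The main obstacle is bookkeeping the chained estimates so that only countably many almost sure events are used: one for each pair $(h,r)$. We also need to check the index shift and that the constants in the chaining stay uniform in $N$. In this form the iterated relation is exactly the telescoping of $|\Phi|$ factors, so the uniqueness statement itself is used only as a guide. The convergence proof reproduces the Fourier contraction pathwise.
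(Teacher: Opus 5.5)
Your proof is correct. The uniqueness part is the same as the paper's, but for the pathwise convergence you take a genuinely different route.

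The paper argues softly. For each $f$ in a countable dense subset of $C(\T)$, the martingale differences $f(X_{n+1})-Pf(X_n)$ show that every weak subsequential limit of the empirical measures $L_N=\frac1N\sum_{n<N}\delta_{X_n}$ is stationary. Uniqueness forces each such limit to be $m$, and Prokhorov compactness then gives convergence of the whole sequence.

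You instead test only against characters and use the exact identity $Pe(h\,\cdot)=\Phi(h\gamma)e(hb\,\cdot)$. Writing $S_N(k)=\frac1N\sum_{n<N}e(kX_n)$, you telescope the almost sure relations $S_N(hb^j)-\Phi(h\gamma b^j)S_N(hb^{j+1})\to0$ to get $\limsup_N|S_N(h)|\le\prod_{r<M}|\Phi(h\gamma b^r)|$ for each fixed $M$. A small point: since $M$ is fixed while $N\to\infty$, the finitely many error terms carry coefficients of modulus at most $1$ and vanish exactly in the limsup. The ``up to accumulated $o(1)$ terms'' qualifier on the limsup bound is therefore unnecessary.

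What each route buys:
\begin{itemize}
\item Yours is more explicit. It needs no compactness of the space of measures and no identification of limit points, and only uses the countable family of events indexed by nonzero integers. The pathwise statement also does not logically depend on the uniqueness statement; it reproduces the same Fourier contraction along each sample path.
\item The paper's is the standard unique-ergodicity argument. It works verbatim for any Feller chain on a compact space with a unique stationary measure, and it separates the arithmetic input (Lemma~\ref{lem:product-zero}, entering only through uniqueness) from the probabilistic one. Yours relies on the special feature that the transition operator maps characters to scalar multiples of characters.
\end{itemize}
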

 The hidden event in the $\mu_{b,\mathbf p}$-almost every statement may depend on the starting point $x_0$. We will only use a fixed starting point in the proof of Proposition~\ref{prop:irrational}, so this issue is harmless for our purposes.
 \begin{proof}
 	The uniform measure is stationary because the maps $x\mapsto bx+\gamma d$ preserve the uniform measure on $\T$ for each fixed $d$.
 	Indeed, by the translation invariance of the Lebesgue measure it suffices to show the claim for the map $x\mapsto bx$. But this follows easily since $b$ is an integer.
 	
 	Let now $\nu$ be any stationary probability measure with Fourier coefficients $\widehat\nu(k)=\int e(kx)\,d\nu(x)$.  Stationarity gives, for $k\in\mathbb Z$,
 	$$ \widehat\nu(k)= \mathbb E_{D_1} \int e(k(by + \gamma D_1))\,d\nu(y) = \Phi(k\gamma)\widehat\nu(kb), $$
 	where all expressions are understood to be $\pmod 1$.
 	Iterating this identity $M$ times, gives
 	$$ \widehat\nu(k) =\widehat\nu(kb^M)\prod_{r=0}^{M-1}\Phi(k\gamma b^r).$$
 	If $k\neq0$, then $k\gamma$ is irrational, so by Lemma~\ref{lem:product-zero} the product tends to $0$. Since $|\widehat\nu(kb^M)|\leq 1$, we get
 	$\widehat\nu(k)=0$ for every $k\neq0$. The Fourier coefficients uniquely characterize the underlying measure. Thus,  $\nu=m$.
 	
 	It remains to prove the pointwise assertion. Fix an $x_0 \in \T$ and suppose $X_0 = x_0$.  For a sample path define the empirical measures
 	$$  L_N=\frac1N\sum_{n=0}^{N-1}\delta_{X_n},$$
 	These are random measures as they depend on the random variables $D_1, \ldots, D_{N-1}$. Given a trial function $f\in C(\T)$, we consider the differences
 	$$ M_{n+1}=f(X_{n+1})-Pf(X_n).$$
 	By construction, these are bounded martingale differences with respect to the natural filtration $\mathcal F_n := \sigma(D_1,\ldots, D_n)$.
 	By the strong law for bounded martingale differences recalled above (see also \cite{Chow67}), it follows
 	$$ \frac1N\sum_{n=0}^{N-1}M_{n+1}\longrightarrow0
 	\qquad\text{almost surely}. $$
 	Also
 	\[ \frac1N\sum_{n=0}^{N-1}f(X_{n+1}) -\frac1N\sum_{n=0}^{N-1}f(X_n)
 	 =\frac{f(X_N)-f(X_0)}{N}\longrightarrow0.\]
 	Therefore almost surely every weak subsequential limit $L$ of $(L_N)$  satisfies
 	$$ \int f\,dL=\int Pf\,dL$$
 	for every $f$ in a fixed countable dense subset of $C(\T)$, and hence for every $f\in C(\T)$. Thus $L$ is stationary. Since the stationary measure is unique, $L=m$ for every weak subsequential limit $L$. By Prokhorov's theorem, the space of probability measures on $\T$ is compact with respect to the weak convergence. Hence, each subsequence $L_{N_k}$ has a further weakly converging subsubsequence $L_{N_{k_l}}$ with limit $m$. This implies that the full sequence converges $L_N \xrightarrow{w} m$ almost surely. This is by definition equivalent to \eqref{eq:equilibrium}.
 \end{proof}
 
 We are now ready to spell out the proof of Proposition~\ref{prop:irrational}.
 
 \begin{proof}[Proof of Proposition~\ref{prop:irrational}]
 	We argue via the well-known Weyl criterion \cite{Weyl1916, Bug12}. Fix $h\in\mathbb Z\setminus\{0\}$ and note that then $h \gamma $ is irrational.  For $n\ge0$, let us introduce the two random sequences
 	$$ A_n :=\sum_{j=1}^{n}D_j b^{n-j}, \qquad R_n :=\sum_{j=1}^{\infty}D_{n+j}b^{-j}, $$
 	and we agree that $A_0 := 0$. Thus,
 	$$ b^nY= A_n+R_n, $$
 	where $A_n$ encodes the integer part of $b^n Y$ and $R_n$ the fractional part. Let us set 
 	$$X_n := h(\gamma A_n+b^n\eta)\pmod 1 $$ 
 	and observe that for $n \geq 0$
 	$$ X_{n+1}= h(\gamma A_{n+1}+b^{n+1}\eta) =  h (\gamma b A_n + \gamma D_{n+1} + b^{n+1} \eta) = b X_n + \gamma h D_{n+1}  \pmod1. $$
 	Hence, $X_n$ is exactly the Markov chain from \eqref{eq:chain} with $h \gamma$ as parameter started at $X_0= h\eta$.
 	
 	We start with a little upgrade of Lemma~\ref{lem:stationary}. We want to consider trial functions which may depend on a finite part of the future.  Let $L\ge1$ and $g:\{0,\dots,b-1\}^L\to\mathbb C$ and consider the bounded process $G_n :=g(D_{n+1},\dots,D_{n+L})$
 	depending on the next $L$ digits. 
 	We claim
 	\begin{equation}\label{eq:finite-future}
 		\frac1N\sum_{n=0}^{N-1} e(X_n)G_n
 		\longrightarrow 0
 		\qquad\text{almost surely}.
 	\end{equation}
 	Let us center $G_n$ as $G_n = (G_n -\bar g) + \bar g$ with  $\bar g :=\E G_n$. We analyze both contributions separately.  By Lemma~\ref{lem:stationary},
 	$$ \frac1N\sum_{n=0}^{N-1}e(X_n)\bar g\longrightarrow \bar g\int_{\T} e(t )\,dm(t) = 0.$$
 	For the centered part, we split the indices $n$ into residue classes modulo $L+1$. That is 
 	$$ \frac1N\sum_{n=0}^{N-1} e(X_n)(G_n - \bar g) = \sum_{k = 0}^{L} \frac1N\sum_{n < N, \, n\bmod (L+1) = k} e(X_n)(G_n - \bar g)$$
 	and it is enough to prove the convergence for each residue class $n_j = k + j(L+1)$. The trick is that along each residue class $k$, the random variables $	e(X_{n_j})(G_{n_j}-\bar g)$ are again bounded martingale differences with respect to the filtration $\mathcal G_j := \sigma(D_1,\ldots, D_{n_j + L})$. We may omit the first term of the residue class. For $j\geq1$, note that $n_{j-1}+L=n_j-1$ and hence $\sigma(\mathcal G_{j-1},D_{n_j})=\mathcal F_{n_j}$. Indeed $e(X_{n_j})(G_{n_j}-\bar g)$ is $\mathcal{G}_j$-measurable, and 
 	\begin{align*} \mathbb{E}[e(X_{n_j})(G_{n_j}-\bar g) \, | \, \mathcal{G}_{j-1} ] & = \mathbb{E}[ \mathbb{E}[e(X_{n_j})(G_{n_j}-\bar g) \, | \, \sigma(\mathcal{G}_{j-1}, D_{n_j})] \, | \, \mathcal{G}_{j-1} ] \\
 		&= \mathbb{E}[e(X_{n_j})\mathbb{E}[(G_{n_j}-\bar g) \, | \, \sigma(\mathcal{G}_{j-1}, D_{n_j})] \, | \, \mathcal{G}_{j-1} ] = 0.
 	\end{align*}
    Here, we used the tower property, that $X_{n_j}$ is $\sigma(\mathcal{G}_{j-1}, D_{n_j})$-measurable, while $G_{n_j}-\bar g$ is centered and independent from $\sigma(\mathcal{G}_{j-1}, D_{n_j})$.  Applying again the strong law for bounded martingale
 	differences,  gives convergence to $0$ along each residue class, and
 	hence \eqref{eq:finite-future}.
 	
 	The rest of the proof is simple: due to \eqref{eq:finite-future} we can control an arbitrarily large block $L \geq 1$ of the remainder $R_n$ and the remaining tail becomes negligible. To be precise,  approximate the remainder $R_n$ by
 	$$R_n^{(L)}=\sum_{j=1}^{L}D_{n+j}b^{-j}.$$
 	Then, $|R_n-R_n^{(L)}|\le b^{-L}$ holds uniformly in $n$.
 	The Lipschitz continuity of the Fourier modes implies
 	$$ \left|e(h \gamma R_n)-e(h \gamma R_n^{(L)})\right|
 	\leq 2\pi |h\gamma|b^{-L}.$$
 	Taking $G_n :=e(h\gamma R_n^{(L)})$
 	in \eqref{eq:finite-future}, we obtain 
 	\begin{align*}
 	\limsup_{N \to \infty} \left|\frac1N\sum_{n=0}^{N-1}e(h b^n(\gamma Y + \eta)) \right|
 	&= \limsup_{N \to \infty} \left|\frac1N\sum_{n=0}^{N-1}e( X_n)e(h \gamma R_n) \right| \\
 	&\leq \limsup_{N \to \infty} \left|\frac1N\sum_{n=0}^{N-1}e(X_n) G_n \right| + 2\pi |h\gamma|b^{-L} = 2\pi |h\gamma|b^{-L}
 	\end{align*}
 	almost surely. We first intersect the events of probability one over the countably many $L$. Since $L$ can then be chosen arbitrarily large, we conclude that $\frac1N\sum_{n=0}^{N-1}e(h b^n(\gamma Y + \eta)) \to 0$ almost surely. Intersecting also over the non-zero integers $h$, Weyl's criterion implies that $\gamma Y+\eta$ is $b$-normal for $\mu_{b,\mathbf p}$-almost every $Y$.
 \end{proof}
 
\subsection{Proof of Theorem~\ref{thm:1}: the absolutely normal case}

The extension to absolutely normal numbers turns out to be easy as we can rely on a beautiful theorem by Hochman and Shmerkin \cite{HS15}. Recall that two natural numbers $m,n$ are multiplicatively independent if $\frac{\log m}{\log n} \notin \Q$.

\begin{theorem}[Theorem 1.10 in \cite{HS15}]\label{thm:HS}
Let $m,n\ge 2$ be multiplicatively independent. Let $\mu$ be an
ergodic $T_m$-invariant probability measure on $[0,1)$ with positive
entropy, where $T_m x=mx \pmod 1$. Let $f$ be a $C^2$ diffeomorphism on $\R$. Then, $f(x)$ is normal to base
$n$ for $\mu$-almost every $x$.
\end{theorem}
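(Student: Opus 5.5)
Theorem~\ref{thm:HS} is quoted from \cite{HS15} and is used in this paper as a black box. If I had to reprove it, I would follow the ``scenery flow'' strategy of Hochman and Shmerkin and split the argument into three steps.

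\textbf{Step 1: a general normality criterion.} For a measure $\nu$ on $\R$ and a point $x$, look at the magnified measures
\[
\nu_{x,t}\propto(\text{restriction of }\nu\text{ to }[x-e^{-t},x+e^{-t}])
\]
rescaled back to $[-1,1]$. Say that $\nu$ generates a distribution $P$ on measures if, for $\nu$-a.e.\ $x$, the time averages $\frac1T\int_0^T\delta_{\nu_{x,t}}\,dt$ converge to $P$. The criterion to prove is: if $P$ is ergodic under the scaling flow, $P$ is not concentrated on atoms (that is, $\nu$ has positive dimension), and the pure point spectrum of $P$ contains no nonzero integer multiple of $1/\log n$, then $\nu$-a.e.\ $x$ is $n$-normal. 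I would argue by contradiction.
\begin{itemize}
\item Suppose a positive-measure set of $x$ fails $n$-normality. Then along a subsequence the orbit measures $\frac1N\sum_{k<N}\delta_{T_n^kx}$ converge to a $T_n$-invariant measure $\tau\neq$ Lebesgue.
\item Sampling the scenery only at the times $t=k\log n$ couples the $T_n$-orbit of $x$ with the scenery process.
\item From this coupling, build a joining of $(P,\text{flow})$ with the suspension of $(\tau,T_n)$ with roof $\log n$.
\item Positive dimension of $P$ rules out a joining that forces $\tau$ to be Lebesgue. So the joining must be nontrivial, and the only way it can be nontrivial is through a common eigenvalue in $\frac{1}{\log n}\Z\setminus\{0\}$. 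This contradicts the spectral assumption.
\end{itemize}
I expect this step to be the main obstacle. The delicate point is turning ``non-Lebesgue $\tau$'' into an actual spectral coincidence. I would first try Fourier/Weyl sums $e(hn^kx)$: rewrite them as integrals over the scenery, and show that a non-decaying average forces an $L^2$ eigenfunction of the flow.

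\textbf{Step 2: $T_m$-invariant measures satisfy the hypotheses.} For ergodic $T_m$-invariant $\mu$ with $h(\mu)>0$, the dimension is $\dim\mu=h(\mu)/\log m>0$. The $m$-adic sceneries along the times $t=k\log m$ form a stationary process driven by the digit shift. Averaging over a fractional time shift $s\in[0,\log m)$ gives a distribution $P$ that is an extension of a suspension with constant roof $\log m$. I would then show:
\begin{itemize}
\item The eigenvalues of this suspension lie in $\frac{1}{\log m}(\Z+\text{discrete spectrum})$. Passing to an ergodic component if necessary, the relevant part is $\frac{1}{\log m}\Z$.
\item Multiplicative independence, $\log m/\log n\notin\Q$, excludes all nonzero elements of $\frac1{\log n}\Z$ from this set.
\end{itemize}
To apply Step 1 one also needs pointwise generation for $\mu$-a.e.\ $x$. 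This comes from Birkhoff's theorem for the shift plus a standard argument controlling boundary effects of $m$-adic cells.

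\textbf{Step 3: the $C^2$ diffeomorphism.} Near $\mu$-a.e.\ point $x$, at scale $e^{-t}$ the map $f$ agrees with its affine approximation $y\mapsto f(x)+f'(x)(y-x)$ up to an error $O(e^{-2t})$. Relative to the window size $e^{-t}$ this error is $O(e^{-t})$, so it vanishes. Hence the sceneries of $f\mu$ at $f(x)$ are asymptotically the sceneries of $\mu$ at $x$, time-shifted by $\log|f'(x)|$ and possibly reflected. Therefore $f\mu$ generates the same $P$, up to reflection, which preserves ergodicity and spectrum. Step 1 then yields $n$-normality of $f(x)$ for $\mu$-a.e.\ $x$. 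The $C^2$ assumption is exactly what makes the linearization error summable and uniform on small intervals where $f'$ is bounded away from $0$.
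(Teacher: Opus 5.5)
The paper gives no proof of Theorem~\ref{thm:HS}. It quotes it from \cite{HS15} and applies it only to Bernoulli measures composed with affine maps (Proposition~\ref{prop:irrational2}, Lemma~\ref{lem:upgrade}). So there is no in-paper argument to compare against. Your scenery-flow outline is a reasonable plan, but both of its substantive steps have genuine gaps.

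\textbf{Step~1.} You infer that a nontrivial joining can only arise through a common eigenvalue in $\frac1{\log n}\Z\setminus\{0\}$. That is false. Absence of common eigenvalues gives disjointness only from Kronecker systems; a weakly mixing system joined diagonally with itself already refutes the general claim. The only Kronecker factor of the roof-$\log n$ suspension of $(\tau,T_n)$ that your hypothesis addresses is the rotation $t\bmod\log n$. So the spectral assumption gives only that the scenery is independent of the scale phase $t\bmod\log n$, equivalently that $P$ is $S_{\log n}$-ergodic. It places no constraint on the joint law of the offset $T_n^kx$ and the scenery.

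What is missing is the geometric link between offset and measure. Re-centering the scenery at a nearby $\mu$-typical point $x+e^{-t}y$ shifts the offset by $y$, measured in units of the current $n$-adic cell. This equivariance, together with independence of the scale phase and non-atomicity of $P$-typical measures, is what has to be exploited to force $\tau$ to be Lebesgue. Even your implicit claim that a product joining forces $\tau$ to be Lebesgue needs it. If the offset were independent of the scenery, equivariance would give $\tau=\tau*\rho$ for a non-atomic $\rho$ on $\T$, hence $\widehat\tau(h)=0$ for $h\neq0$. Positive dimension alone gives nothing here.

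\textbf{Step~2.} This step does not verify the spectral hypothesis in the generality of the statement. To bound $\Sigma(P)$ you need $P$ to be a \emph{factor}, not an extension, of the roof-$\log m$ suspension of the digit process. Then $\Sigma(P)\subseteq\{(\alpha+j)/\log m:\ j\in\Z\}$, with $e(\alpha)$ ranging over the eigenvalues of $(T_m,\mu)$. Passing to an ergodic component removes nothing, since $\mu$ is already ergodic, and positive entropy does not exclude irrational $\alpha$.

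For example, let the base-$4$ digits be $2R_k+B_k$, where $R_k=\1_{[0,1/2)}(\phi+k\alpha\bmod 1)$, $\phi$ is uniform, and the $B_k$ are independent fair bits. This $\mu$ is ergodic with entropy $\log 2$. Its $P$-typical sceneries display both the $4$-adic scale and the Sturmian phase, so $(\Z+\alpha\Z)/\log 4\subseteq\Sigma(P)$. For $\alpha=\log4/\log3-1$ this set contains $(1+\alpha)/\log4=1/\log3$. So your criterion is unavailable for $(m,n)=(4,3)$, even though the conclusion holds there by Host's theorem.

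Your route therefore covers only those $\mu$ for which no $e(k\log m/\log n)$ with $k\neq0$ is an eigenvalue. That includes the Bernoulli measures the paper actually needs, but the full statement requires a further argument.

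\textbf{Step~3.} This step is fine; $C^1$ would already suffice there.
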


    Let us restrict our probabilistic model to the special case
\begin{equation}\label{eq:Y2}
Y :=\sum_{j=1}^{\infty}D_j 2^{-j},
\end{equation}
where the digits $D_j$ are i.i.d. Bernoulli random variables with success probability $p \in (0,1) \setminus\{1/2\}$. The following sharpens Proposition~\ref{prop:irrational}.
\begin{prop}\label{prop:irrational2}
Let $Y$ be as above and $\gamma\notin\Q$. Then, for every $\eta\in\R$,
\begin{equation}\label{eq:key2}
\mathbb{P} \left(\gamma Y + \eta \text{ is absolutely normal } \right) = 1.
\end{equation}
\end{prop}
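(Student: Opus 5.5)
The plan is to split the integer bases $n\ge2$ into two classes: the powers of $2$, and the bases multiplicatively independent of $2$. Every $n\ge2$ falls into exactly one class. If $\log n/\log 2\in\Q$, then $n^q=2^p$ for positive integers $p,q$, so $n=2^{p/q}$; since $n$ is an integer, unique factorisation forces $n=2^k$. Because there are countably many bases, it suffices to prove that for each fixed base $n$ the event $\{\gamma Y+\eta\in\calN_n\}$ has probability one, and then intersect these events.

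\emph{Powers of two.} This case is Proposition~\ref{prop:irrational} with $b=2$ and $\mathbf p=(1-p,p)$. The vector $\mathbf p$ has positive entries and is non-uniform, since $p\in(0,1)\setminus\{1/2\}$. So the proposition gives $\gamma Y+\eta\in\calN_2$ almost surely. By the equivalence $\calN_2=\calN_{2^k}$ recalled at the start of the section, we get $\gamma Y+\eta\in\calN_{2^k}$ for every $k\ge1$ on the same full-probability event.

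\emph{Bases independent of $2$.} Here I will apply Theorem~\ref{thm:HS} with $m=2$. Take $\mu=\mu_{2,\mathbf p}$, the law of $Y$, and take the $C^2$ diffeomorphism $f(x)=\gamma x+\eta$ of $\R$. We have $\gamma\neq0$ because $\gamma\notin\Q$, so $f$ really is a diffeomorphism. Three hypotheses on $\mu$ must be checked.
\begin{enumerate}[(i)]
\item \emph{$T_2$-invariance.} Under $\mu$, the map $T_2$ acts on binary expansions as the left shift of the i.i.d. digit sequence $(D_j)$. Binary expansions are non-unique only on a countable set, and $\mu$ has no atoms because $0<p<1$. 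So $T_2$ is conjugate $\mu$-almost everywhere to the one-sided Bernoulli shift, and the Bernoulli product measure is shift-invariant.
\item \emph{Ergodicity.} This transfers through the same conjugacy from the Bernoulli shift, for instance by Kolmogorov's $0$--$1$ law.
\item \emph{Positive entropy.} The entropy equals $-p\log p-(1-p)\log(1-p)>0$.
\end{enumerate}
Theorem~\ref{thm:HS} then gives $\gamma Y+\eta\in\calN_n$ for $\mu$-almost every $Y$.

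Intersecting over all $n\ge2$ yields \eqref{eq:key2}. I expect no real obstacle. The only point that needs some care is the identification of $(T_2,\mu_{2,\mathbf p})$ with the Bernoulli shift, which must handle the measure-zero set of dyadic rationals with two expansions.
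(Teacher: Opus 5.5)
Your proposal is correct and follows essentially the same route as the paper. The paper also obtains the powers of $2$ from Proposition~\ref{prop:irrational} together with $\calN_2=\calN_{2^k}$, and treats every other base by applying Theorem~\ref{thm:HS} to the law of $Y$, using $T_2$-invariance, ergodicity via the Kolmogorov $0$--$1$ law, entropy $-p\log p-(1-p)\log(1-p)>0$, and the affine diffeomorphism $x\mapsto\gamma x+\eta$.
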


Taking Proposition~\ref{prop:irrational2} for granted, the rest of Theorem~\ref{thm:1} follows exactly as the first part. Thus, it suffices to prove Proposition~\ref{prop:irrational2}.
\begin{proof}[Proof of Proposition~\ref{prop:irrational2}]
Since the set of bases is countable, it is enough to prove the almost sure $b$-normality of $\gamma Y + \eta$ for each fixed $b \geq 2$. By Proposition~\ref{prop:irrational}, $\gamma Y + \eta$ is almost surely $2$-normal. This implies almost surely normality for all bases $b = 2^n$ and $n \geq 1$. Let $b \notin \{2^n\}_{n \geq 1}$. Then, $b$ is multiplicatively independent from $2$. Note that the law $\mu$ of $Y$ is $T_2$-invariant and has positive entropy since $p \neq 0,1$. Recall further that the Kolmogorov $0-1$ law implies the ergodicity of $\mu$: an event invariant under the digit shift belongs to the tail $\sigma$-algebra. Its entropy is $-p\log p-(1-p)\log(1-p)>0$. Since $x \mapsto \gamma x + \eta$ is obviously a diffeomorphism on $\R$, we deduce that $\gamma Y + \eta$ is almost surely $b$-normal. This completes the proof.
\end{proof}

\begin{proof}[Proof of Theorem~\ref{thm:1}: the absolutely normal case]
The rational inclusion again follows from Wall's theorem. If
$\gamma\notin\Q$, Proposition~\ref{prop:irrational2} shows that
$\gamma^{-1}Y$ is absolutely normal almost surely, whereas $Y$ is
almost surely not normal in base $2$. This excludes $\gamma$.
\end{proof}

Let us also make the direct connection with~\cite[Corollary~5]{DGW24}
explicit. Let $C$ be the middle-thirds Cantor set and take a Bernoulli
measure on it. That corollary implies that $\gamma^{-1}y$ is absolutely
normal for almost every $y\in C$ when $\gamma$ is irrational. Since
no point of $C$ is $3$-normal, this already proves the irrational
exclusion in~\eqref{eq:allpreserve}.

\begin{cor}\label{cor:dimension}
For every irrational $\gamma$, the sets $E_{\gamma,b}$ and $E_\gamma$
defined in the introduction have Hausdorff dimension $1$ and Lebesgue
measure zero.
\end{cor}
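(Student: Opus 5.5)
The plan is to settle measure zero first, since it is immediate, and then spend the effort on dimension one, taking the lower bound from Bernoulli measures of nearly full dimension. The upper bound $\dim_H\le 1$ is trivial for subsets of $\R$. For measure zero, write $E_{\gamma,b}\subseteq\{x:\gamma x\notin\calN_b\}=\gamma^{-1}(\R\setminus\calN_b)$. By Borel's theorem $\R\setminus\calN_b$ is Lebesgue null, and a linear image of a null set is null. The same argument with $\R\setminus\calN$ gives $|E_\gamma|=0$.

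For the lower bound in the fixed-base case, take a non-uniform vector $\mathbf p$ close to uniform and let $Y$ have law $\mu_{b,\mathbf p}$. Apply Proposition~\ref{prop:irrational} with $\gamma^{-1}$ replaced by $\gamma$ and $\eta=0$, but in the reverse direction. Concretely, we want a set of full $\mu$-measure consisting of points $x$ that are $b$-normal while $\gamma x$ is not. So we set $x=\gamma^{-1}Y$. By Proposition~\ref{prop:irrational} applied to the irrational number $\gamma^{-1}$, the point $x$ is almost surely $b$-normal. On the other hand $\gamma x=Y$ is almost surely not $b$-normal. Hence the pushforward $\nu$ of $\mu_{b,\mathbf p}$ under $y\mapsto\gamma^{-1}y$ gives full mass to $E_{\gamma,b}$.

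Next, $\mu_{b,\mathbf p}$ is exact dimensional with dimension $H(\mathbf p)/\log b$, where $H(\mathbf p)=-\sum_d p_d\log p_d$. This is standard: by the strong law of large numbers the $b$-adic cylinder of level $n$ containing $Y$ has mass $e^{-nH(\mathbf p)(1+o(1))}$ almost surely. One then passes to balls by the usual neighbouring-cylinder argument, or simply invokes Billingsley's lemma. A bi-Lipschitz linear map preserves local dimensions, so the mass distribution principle (Billingsley) gives
\[
\dim_H E_{\gamma,b}\ge \frac{H(\mathbf p)}{\log b}.
\]
Letting $\mathbf p\to(1/b,\dots,1/b)$ makes the right-hand side tend to $1$.

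For $E_\gamma$ I would do the same with base $2$ and Proposition~\ref{prop:irrational2}. There $x=\gamma^{-1}Y$ is almost surely absolutely normal, while $\gamma x=Y$ is not $2$-normal and hence not absolutely normal. The dimension bound is $-p\log_2p-(1-p)\log_2(1-p)$, which tends to $1$ as $p\to1/2$.

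The main obstacle is the routine justification of the dimension of $\mu_{b,\mathbf p}$, namely the passage from cylinders to balls. I would handle it with Billingsley's lemma for $b$-adic nets, which gives the Hausdorff dimension lower bound from cylinder masses directly. Since $\gamma$ is fixed and the map is linear, transferring the bound to the image under $y\mapsto\gamma^{-1}y$ costs nothing.
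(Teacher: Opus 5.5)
Your proof is correct, and its overall structure matches the paper's. The measure-zero part uses $E_{\gamma,b}\subseteq\gamma^{-1}(\R\setminus\calN_b)$. The lower bound uses the law of $\gamma^{-1}Y$, which gives full mass to $E_{\gamma,b}$; for $E_\gamma$ one uses Proposition~\ref{prop:irrational2} with binary $Y$. Then $\mathbf p$ is sent to the uniform vector. Your sentence about applying the proposition ``with $\gamma^{-1}$ replaced by $\gamma$ \dots in the reverse direction'' is garbled, but the concrete statement that follows it (apply Proposition~\ref{prop:irrational} to the irrational $\gamma^{-1}$ with $\eta=0$) is the right one.

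Where you differ is in how you bound the dimension of the Bernoulli measure.

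\emph{Your route.} You use exact dimensionality at the Shannon-entropy value $H(\mathbf p)/\log b$. This is a pointwise, almost-sure statement. It needs the strong law along the digit sequence, and then Billingsley's lemma to pass from $b$-adic cylinders to arbitrary covers.

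\emph{The paper's route.} It uses the cruder exponent $s=-\log p_*/\log b$, where $p_*=\max_d p_d$. Every level-$n$ cylinder has mass at most $p_*^n$. An interval with $b^{-n}\le|J|<b^{-n+1}$ meets at most $b+2$ such cylinders. Hence $\mu_{b,\mathbf p}(J)\le(b+2)|J|^s$ uniformly over all intervals, and the mass distribution principle gives $\dim_H\ge s$ for every full-measure set.

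\emph{Comparison.} Because the paper's bound is uniform rather than pointwise, the cylinder-to-interval step you flag as the main obstacle becomes a one-line count. No almost-sure statement or Billingsley lemma is needed. Your version gives the sharp dimension of $\mu_{b,\mathbf p}$. Since both exponents tend to $1$ as $\mathbf p$ becomes uniform, that extra precision is not needed for this corollary.
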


\begin{proof}
Put $p_*:=\max_dp_d$ and $s:=-\log p_*/\log b$. A level-$n$ base-$b$
cylinder has $\mu_{b,\mathbf p}$-mass at most $p_*^n$. An interval $J$
with $b^{-n}\le |J|<b^{-n+1}$ intersects at most $b+2$ such cylinders.
Thus
\[
 \mu_{b,\mathbf p}(J)\le(b+2)|J|^s.
\]
The same estimate, with a different constant, holds after a fixed
non-zero affine scaling. Every full-measure set for such a measure has
Hausdorff dimension at least $s$: apply the displayed estimate to any
interval cover. The proof of~\eqref{eq:bpreserve} gives a full-measure
subset of $E_{\gamma,b}$ for the law of $\gamma^{-1}Y$. Letting a
nonuniform $\mathbf p$ tend to the uniform vector gives dimension $1$.
For $E_\gamma$, use~\eqref{eq:Y2} and let $p\to1/2$. Finally, each
exceptional set is contained in the inverse image of a Lebesgue null
nonnormal set under a non-zero linear map, so is null.
\end{proof}

\section{Deterministic numbers and additive operations}
\label{sec:determinism}

The additive analogue of Theorem~\ref{thm:1} is quite different. For a
fixed base, Rauzy's theorem identifies the admissible translations with
the completely deterministic numbers. We first explain this condition
and its formulation in terms of digit frequencies. We then turn to the
absolutely normal case and prove that a translation preserves absolute
normality precisely when it preserves normality in every base
separately.

\subsection{Deterministic numbers and Rauzy's theorem}

Let us fix an integer base $b\geq2$ and a real number $a$. Write
\[
 \{a\}=0.a_1a_2a_3\cdots
\]
for its canonical base-$b$ expansion. As before, we choose the
terminating expansion whenever there are two choices. The frequencies
of finite words in this sequence need not converge. We therefore keep
track of all possible limiting digit distributions, rather than assume
that there is a single one.

To make this precise, let
\[
 \Omega_b:=\{0,\ldots,b-1\}^{\N},\qquad
 \omega_b(a):=(a_1,a_2,\ldots),
\]
where $\Omega_b$ carries the product topology, and let $\sigma$ denote
the left shift. We consider the empirical measures
\[
 L_{b,N}(a):=\frac1N\sum_{n=0}^{N-1}
                  \delta_{\sigma^n\omega_b(a)}.
\]
For a word $w=(w_1,\ldots,w_k)$, its cylinder is
\[
 [w]:=\{\omega\in\Omega_b:\ \omega_1=w_1,\ldots,\omega_k=w_k\}.
\]
Thus $L_{b,N}(a)([w])$ is exactly the frequency with which $w$ starts
at one of the first $N$ positions of the digit sequence. Blocks are
allowed to overlap, and the last $k-1$ digits of a block may lie beyond
the $N$th position.

Let $V_b(a)$ be the set of weak subsequential limits of $L_{b,N}(a)$
as $N\to\infty$. Since $\Omega_b$ is compact, the space of probability
measures on it is compact as well. In particular, $V_b(a)$ is a
nonempty compact set. Every $\mu\in V_b(a)$ is shift-invariant.
Indeed, for a continuous trial function $F$,
\[
 \int F\circ\sigma\,dL_{b,N}(a)-\int F\,dL_{b,N}(a)
 =\frac{F(\sigma^N\omega_b(a))-F(\omega_b(a))}{N}
 \longrightarrow0.
\]
Passing to a subsequence along which $L_{b,N}(a)$ converges proves
$\int F\circ\sigma\,d\mu=\int F\,d\mu$.

For a probability measure $\mu$ on $\Omega_b$, define its length-$k$
block entropy by
\[
 H_k(\mu):=-\sum_{|w|=k}\mu([w])\log_b\mu([w]),
 \qquad 0\log_b0:=0.
\]
If $\mu$ is shift-invariant, the entropy inequality for two consecutive
blocks gives
\[
 H_{k+\ell}(\mu)\leq H_k(\mu)+H_\ell(\mu).
\]
Here invariance ensures that the second block has the same distribution
as a block of length $\ell$ starting at the first digit. Subadditivity
therefore guarantees the existence of the limit
\[
 h_b(\mu):=\lim_{k\to\infty}\frac{H_k(\mu)}k
          =\inf_{k\geq1}\frac{H_k(\mu)}k.
\]
This is the entropy of the shift, normalized by $\log b$. In
particular, $0\leq h_b(\mu)\leq1$. The uniform Bernoulli measure has
entropy $1$, whereas a measure supported on a finite periodic orbit
has entropy zero.

\begin{defn}\label{def:determinism}
A real number $a$ is completely deterministic in base $b$ if
$h_b(\mu)=0$ for every $\mu\in V_b(a)$.
\end{defn}

A digit sequence can have different
limiting distributions along different subsequences and it is important to note that complete
determinism requires zero entropy for all of them.

There is also a formulation which only involves the digits of $a$.
For a word $w$ of length $k$, put
\[
 p_{b,k,N}^a(w):=\frac1N\#\{0\leq n<N:
                     a_{n+1}\cdots a_{n+k}=w\},
\]
and set
\begin{equation}\label{eq:blockentropy}
 H_{b,k,N}(a):=-\sum_{|w|=k}
                    p_{b,k,N}^a(w)\log_b p_{b,k,N}^a(w).
\end{equation}
The next observation explains exactly how these empirical entropies
recover the preceding definition.

\begin{prop}\label{prop:entropy}
For every $a\in\R$ and $b\geq2$,
\begin{equation}\label{eq:entropyidentity}
 \inf_{k\geq1}\frac1k\limsup_{N\to\infty}H_{b,k,N}(a)
 =\lim_{k\to\infty}\frac1k\limsup_{N\to\infty}H_{b,k,N}(a)
 =\max_{\mu\in V_b(a)}h_b(\mu).
\end{equation}
In particular, complete determinism is equivalent to the vanishing of
these quantities.
\end{prop}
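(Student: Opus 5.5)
Write $\Lambda_k:=\limsup_{N\to\infty}H_{b,k,N}(a)$ and $h^*:=\max_{\mu\in V_b(a)}h_b(\mu)$. The maximum is attained because $V_b(a)$ is compact and $\mu\mapsto h_b(\mu)=\inf_k H_k(\mu)/k$ is an infimum of weakly continuous functions, hence upper semicontinuous. Cylinders are clopen, so $\mu\mapsto\mu([w])$ and therefore $H_k$ are continuous.

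The first step is to identify $\Lambda_k$ with $\max_{\mu\in V_b(a)}H_k(\mu)$. By definition $p^a_{b,k,N}(w)=L_{b,N}(a)([w])$, so $H_{b,k,N}(a)=H_k(L_{b,N}(a))$. Choose a subsequence $N_j$ realizing the limsup and pass to a further weakly convergent subsequence with limit $\mu\in V_b(a)$. By continuity of $H_k$ this gives $\Lambda_k=H_k(\mu)\le\max_{V_b(a)}H_k$. Conversely, any $\mu\in V_b(a)$ is the limit along some subsequence, so $H_k(\mu)\le\Lambda_k$. Hence $\Lambda_k=\max_{\mu\in V_b(a)}H_k(\mu)$.

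The second step is subadditivity of $k\mapsto\Lambda_k$. For each $\mu\in V_b(a)$ (shift-invariant) we have $H_{k+\ell}(\mu)\le H_k(\mu)+H_\ell(\mu)\le\Lambda_k+\Lambda_\ell$. Taking the maximum over $\mu$ gives $\Lambda_{k+\ell}\le\Lambda_k+\Lambda_\ell$. Fekete's lemma then yields $\lim_k\Lambda_k/k=\inf_k\Lambda_k/k$, which is the first equality in \eqref{eq:entropyidentity}.

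The third step compares with $h^*$. For each $\mu\in V_b(a)$ and each $k$, $h_b(\mu)\le H_k(\mu)/k\le\Lambda_k/k$. Taking the infimum over $k$ and the maximum over $\mu$ gives $h^*\le\inf_k\Lambda_k/k$. For the reverse inequality, which I expect to be the main point, pick for each $k$ a measure $\mu_k\in V_b(a)$ with $H_k(\mu_k)=\Lambda_k$. By compactness some subsequence $\mu_{k_i}\to\mu\in V_b(a)$. Fix $m$. For $k\ge m$, write $k=qm+r$ with $0\le r<m$; subadditivity for the invariant $\mu_k$ gives
\[
\frac{\Lambda_k}{k}=\frac{H_k(\mu_k)}{k}\le\frac{q\,H_m(\mu_k)+H_r(\mu_k)}{k}\le\frac{H_m(\mu_k)}{m}+\frac{m}{k},
\]
using $H_r\le r\le m$ (entropy in base $b$). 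Letting $k=k_i\to\infty$ and using continuity of $H_m$ yields $\lim_k\Lambda_k/k\le H_m(\mu)/m$. Taking the infimum over $m$ gives $\lim_k\Lambda_k/k\le h_b(\mu)\le h^*$. This closes the chain of equalities.

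The final statement follows at once. Since $h_b\ge0$, complete determinism ($h_b(\mu)=0$ for all $\mu\in V_b(a)$) is equivalent to $h^*=0$, which by \eqref{eq:entropyidentity} is equivalent to the vanishing of the empirical quantities.
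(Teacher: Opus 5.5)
Your proof is correct and follows the same route as the paper. You identify $\Lambda_k$ with $\max_{V_b(a)}H_k$ by compactness and continuity, use subadditivity and Fekete for the first equality, and interchange the limit with the maximum by choosing maximizers and passing to a weak limit $\mu$. The only variation is in that last step: you use the division-with-remainder bound $H_k(\mu)/k\le H_m(\mu)/m+m/k$, which holds for every invariant $\mu$ and every $k\ge m$, where the paper uses the monotone dyadic sequence $2^{-j}H_{2^j}$; both serve to bound $H_k/k$ uniformly in $\mu$ so that the estimate survives the passage to the weak limit.
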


\begin{proof}
Let us write $K=V_b(a)$. For fixed $k$, the function $\mu\mapsto H_k(\mu)$
is continuous: the cylinder sets are both closed and open, there are
only finitely many words of length $k$, and $-t\log_b t$ is continuous
on $[0,1]$ with the convention at $0$ stated above. Moreover,
\[
 H_k(L_{b,N}(a))=H_{b,k,N}(a).
\]
We first claim that
\begin{equation}\label{eq:entropymax}
 A_k:=\limsup_{N\to\infty}H_{b,k,N}(a)
          =\max_{\mu\in K}H_k(\mu).
\end{equation}
To see one inequality, choose a subsequence of $N$ along which the
entropy tends to its limsup. By compactness, a further subsequence of
the corresponding empirical measures converges to some $\mu\in K$.
Continuity gives $A_k=H_k(\mu)$. Conversely, every $\mu\in K$ is the
limit of such a subsequence, and hence $H_k(\mu)\leq A_k$. This
proves~\eqref{eq:entropymax}.

All measures in $K$ are invariant, so the entropy inequality above
implies
\[
 A_{k+\ell}=\max_{\mu\in K}H_{k+\ell}(\mu)
       \leq\max_{\mu\in K}H_k(\mu)
          +\max_{\mu\in K}H_\ell(\mu)
       =A_k+A_\ell.
\]
It follows that $A_k/k$ converges and that its limit equals its
infimum. This proves the first equality in~\eqref{eq:entropyidentity}.

It remains to justify the interchange with the maximum. We cannot
simply interchange a limit and a maximum without an argument. For
$j\geq0$, put
\[
 g_j(\mu):=2^{-j}H_{2^j}(\mu),\qquad
 c_j:=\max_{\mu\in K}g_j(\mu).
\]
The inequality $H_{2k}(\mu)\leq2H_k(\mu)$ shows that the continuous
functions $g_j$ decrease pointwise to $h_b$. Consequently, $c_j$
decreases to some limit $c$. Choose a maximizer $\mu_j\in K$ for
each $j$, and then a subsequence $\mu_{j_\ell}$ converging to a
measure $\mu\in K$. For a fixed $s$ and all sufficiently large
$\ell$, we have
\[
 c_{j_\ell}=g_{j_\ell}(\mu_{j_\ell})
                      \leq g_s(\mu_{j_\ell}).
\]
Taking $\ell\to\infty$ gives $c\leq g_s(\mu)$. We may now let
$s\to\infty$ and obtain $c\leq h_b(\mu)$. On the other hand, for
any $\nu\in K$,
\[
 h_b(\nu)\leq g_j(\nu)\leq c_j
 \qquad\text{for every }j,
\]
so $h_b(\nu)\leq c$. Thus $h_b(\mu)=c=\max_Kh_b$. Finally,
$c_j=A_{2^j}/2^j$, whose limit is the limit of the full sequence
$A_k/k$ already obtained by subadditivity. This completes the proof.
\end{proof}

We record Rauzy's characterization in the following form\cite{Rauzy76}.
\begin{theorem}[Rauzy~\cite{Rauzy76}]\label{thm:Rauzy}
For an integer $b\geq2$ and $a\in\R$, the following statements are
equivalent:
\begin{enumerate}
\item $a$ is completely deterministic in base $b$;
\item $x+a\in\calN_b$ for every $x\in\calN_b$;
\item $x\in\calN_b$ if and only if $x+a\in\calN_b$, for every $x\in\R$.
\end{enumerate}
\end{theorem}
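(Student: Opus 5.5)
We will prove the cycle (3)$\Rightarrow$(2)$\Rightarrow$(1)$\Rightarrow$(3). The first implication is trivial. The whole argument lives on the symbolic side, where normality of $x$ means $V_b(x)$ is the single uniform Bernoulli measure $\lambda$. The natural tool is Furstenberg's disjointness: a zero-entropy invariant measure is disjoint from the Bernoulli shift. The digits of $x+a$ are obtained from those of $x$ and $a$ by digitwise addition modulo $b$ plus a carry. The carry at position $n$ is determined by the digits to the right, which is the main technical nuisance.

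For (1)$\Rightarrow$(3), fix $x\in\calN_b$. Consider the joint empirical measures of the pair sequence $(\omega_b(x),\omega_b(a))$ on $\Omega_b\times\Omega_b$, and let $\rho$ be any subsequential limit. It is shift invariant. Its first marginal is $\lambda$, and its second marginal lies in $V_b(a)$, so it has zero entropy. The marginal $\lambda$ is a Bernoulli measure, hence a K-system, and zero-entropy systems are disjoint from K-systems. Therefore $\rho=\lambda\times\nu$.

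Next we read off the digits of $x+a$. Up to a carry, digit $n$ of $x+a$ is a function of the two tails. The carry into position $n$ is decided by the first index $m>n$ where $x_m+a_m\neq b-1$. We approximate this by a local rule looking $L$ digits ahead. It fails only if $x_{n+1}+a_{n+1},\dots,x_{n+L}+a_{n+L}$ are all equal to $b-1$. Under $\lambda\times\nu$ this event has mass $b^{-L}$, because conditionally on the $a$-digits each $x$-digit is uniform. Hence the empirical frequencies of words in $x+a$ agree, up to $O(b^{-L})$, with those of the block map $\Psi_L$ applied to pairs, with the limit taken against $\lambda\times\nu$.

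It remains to compute $(\Psi_L)_*(\lambda\times\nu)$. Since $\lambda$ is uniform and independent of the second coordinate, adding any sequence digitwise with carries is a measure-preserving bijection of $\lambda$ on cylinders. Concretely, for fixed $a$-digits the map on the $x$-word is a bijection modulo boundary carries. So the pushforward is $\lambda$ up to $O(b^{-L})$. Letting $L\to\infty$ gives $x+a\in\calN_b$. For the reverse direction of (3), note that $-a$ is also completely deterministic. Its digits are $b-1-a_n$ up to an eventual change, and complementation preserves entropy. Applying the forward statement to $-a$ shows that $x+a\in\calN_b$ implies $x\in\calN_b$.

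For (2)$\Rightarrow$(1), argue by contraposition. Suppose some $\nu\in V_b(a)$ has $h_b(\nu)>0$, realized along a subsequence $N_j$. We must build a normal $x$ with $x+a$ nonnormal. This is the main obstacle. The idea is to use a positive-entropy factor: Sinai's factor theorem gives a Bernoulli factor of $\nu$. Rauzy's approach is to choose the digits of $x$ along the blocks $[N_{j-1},N_j)$ so that $x_n\approx(\text{digit correction})-a_n$ correlates with $a$'s own randomness. A cleaner alternative I would try first is random. Choose $x$'s digits so that conditionally on the $a$-digits they are random but correlated with $a$. The requirement is that $x$ is uniform while $x+a$ has a biased word frequency along $N_j$. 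Positive entropy lets $a$ itself behave like a source of uniform-ish noise, which makes this coupling possible. I expect this construction and its verification to be the hard part, and would follow Rauzy's entropy-counting argument for it.
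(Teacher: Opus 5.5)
Your proof of (1)$\Rightarrow$(3) is sound, and it takes a different route from the paper. The paper does not reprove this theorem. It cites \cite{Rauzy76} and \cite{BD25} for the equivalence of (1) and (2), and supplies only the reflection argument for (2)$\Leftrightarrow$(3): if $S=-S$ and $S+a\subseteq S$ then $S-a\subseteq S$, with $\calN_b=-\calN_b$ by Wall. Your argument runs as follows. Joint limits $\rho$ of the empirical measures of $(\omega_b(x),\omega_b(a))$ have marginals $\lambda$ and some $\nu\in V_b(a)$. Furstenberg disjointness, applied to natural extensions, forces $\rho=\lambda\times\nu$. The carry map intertwines the shifts, is continuous off a $\rho$-null set, and for each fixed second coordinate pushes $\lambda$ to $\lambda$, since it is a rotation of $[0,1)$ written in digits. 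Your reverse half via $-a$ is also fine, although the paper's reflection trick gives it without any entropy considerations.

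The gap is (2)$\Rightarrow$(1). This is the hard half of Rauzy's theorem, and your proposal does not prove it: it offers only intentions (``I would try'', ``I expect this \ldots to be the hard part'').

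Concretely, from some $\nu\in V_b(a)$ with $h_b(\nu)>0$ you must build a single real $x$ that is $b$-normal along \emph{every} $N$, while $x+a$ fails normality along the subsequence $N_j$. Your own first step shows the obstruction. If $x$ were conditionally uniform given $a$, then $x+a$ would be normal, so any witnessing coupling must be non-uniform given $a$. Then the empirical law of $x$ at time $N$ averages these non-uniform conditional laws against the statistics of $a$ up to time $N$. At intermediate times those statistics can be far from $\nu$, so a coupling tuned to $\nu$ destroys the normality of $x$ there.

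One therefore has to localize. Cut the digits into windows of intermediate length, correlate $x$ with $a$ only on windows where $a$ has large local $k$-block entropy, and take $x$ uniform and independent elsewhere. Two things then have to be proved:
(i) $h_b(\nu)>0$ forces a positive proportion of such windows below $N_j$. This does not follow from concavity of entropy, which points the wrong way; it needs a type-counting or ergodic-decomposition argument.
(ii) A finite lemma: on a high-entropy window, one can choose $x$-digits with nearly uniform block statistics for which $x+a$ has biased block statistics, with carries controlled.

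Sinai's factor theorem does not replace these steps. It requires ergodicity, which $\nu$ need not have. It also yields only an abstract factor map, hence at best an abstract joining, not a deterministic digit sequence realizing that joining against the given digits of $a$. Until this construction is carried out, your cycle (3)$\Rightarrow$(2)$\Rightarrow$(1)$\Rightarrow$(3) is not closed.
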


Thus Definition~\ref{def:determinism} characterizes precisely the class
$\calD_b$ from~\eqref{eq:Db}. Let us also explain why the one-sided
and two-sided preservation statements agree. Suppose $S\subset\R$
satisfies $S=-S$ and $S+a\subseteq S$. For $x\in S$, we have
$-x\in S$, hence $-x+a\in S$ and therefore $x-a\in S$. This shows
$S-a\subseteq S$ as well. Applying this last inclusion to $x+a$
proves the reverse implication whenever $x+a\in S$. Both $\calN_b$
and $\calN$ satisfy $S=-S$ by Wall's theorem.

In particular, $\calD_b$ and $\calD$ are additive groups. Both groups contain $\Q$ and are stable under multiplication by rational numbers. . From the digit point of view,
a rational number is deterministic because its expansion is eventually
periodic. The empirical limit is supported on a finite periodic orbit,
so its entropy is zero.

\subsection{Proof of Theorem~\ref{thm:addition}: the absolutely normal case}
\label{sec:absolute-addition}

The inclusion $\bigcap_{b\geq2}\calD_b\subseteq\calD$ is immediate.
The converse is more delicate. If $a\notin\calD_b$, there is a
$b$-normal number $u$ for which $u+a$ is not $b$-normal. However, this
alone does not exclude $a$ from $\calD$, since $u$ need not be
absolutely normal. We want to add the same number $z$ to both $u$
and $u+a$. Their difference then remains $a$. The choice of $z$
should improve the normality of the first number without repairing
the failure of normality of the second one.

Our perturbation has independent digits at widely separated positions
in base $b$. There are two separate points to verify. Sparsity gives
a deterministic estimate for the number which is not $b$-normal.
Independence gives the averaging statement needed for the number which
is $b$-normal. Let us first record the latter argument. It is a
finite-future argument of the same kind as the one used in the proof
of Proposition~\ref{prop:irrational}.

\begin{lemma}\label{lem:weightedfuture}
Let $B\geq3$ be an integer, and let $\varepsilon_1,\varepsilon_2,\ldots$
be independent random variables with
$\Prob(\varepsilon_j=0)=\Prob(\varepsilon_j=1)=1/2$. Put
\[
 Z:=\sum_{j=1}^{\infty}\varepsilon_jB^{-j},\qquad
 \nu_B:=\operatorname{law}(Z).
\]
Fix an integer $k$ and a deterministic sequence $(w_n)_{n\geq0}$ with
$|w_n|\leq1$. Then,
\begin{equation}\label{eq:weightedfuture}
 \frac1N\sum_{n=0}^{N-1}w_n
       \bigl(e(kB^nZ)-\widehat\nu_B(k)\bigr)
       \longrightarrow0\qquad\text{almost surely}.
\end{equation}
\end{lemma}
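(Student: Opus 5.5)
Writing $B^nZ=A_n+R_n$ with $A_n\in\Z$ and $R_n:=\sum_{j\geq1}\varepsilon_{n+j}B^{-j}$, we have $e(kB^nZ)=e(kR_n)$. The plan is to reuse the finite-future trick from the proof of Proposition~\ref{prop:irrational}. Since the shifted sequence $(\varepsilon_{n+j})_{j\ge1}$ has the same law as $(\varepsilon_j)_{j\geq1}$, the law of $R_n$ is $\nu_B$ for every $n$, and hence $\E e(kR_n)=\widehat\nu_B(k)$. The summands in \eqref{eq:weightedfuture} are therefore centered and bounded by $2$. They are not independent, since $R_n$ depends on the whole future, so we truncate. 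Fix $L\geq1$ and put $R_n^{(L)}:=\sum_{j=1}^{L}\varepsilon_{n+j}B^{-j}$. Then $|R_n-R_n^{(L)}|\leq B^{-L}/(B-1)\le B^{-L}$, so $|e(kR_n)-e(kR_n^{(L)})|\leq2\pi|k|B^{-L}$. Likewise $|\widehat\nu_B(k)-c_L|\leq2\pi|k|B^{-L}$, where $c_L:=\E e(kR_n^{(L)})$ does not depend on $n$. Since $|w_n|\le1$, the weighted average in \eqref{eq:weightedfuture} differs from
\[
 \frac1N\sum_{n=0}^{N-1}w_n\bigl(e(kR_n^{(L)})-c_L\bigr)
\]
by at most $4\pi|k|B^{-L}$, uniformly in $N$.

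It then suffices to show that this truncated average tends to $0$ almost surely for each fixed $L$. The summand $Y_n:=w_n(e(kR_n^{(L)})-c_L)$ is a function of $\varepsilon_{n+1},\dots,\varepsilon_{n+L}$ only, and it is centered. Split $n$ into residue classes modulo $L$. Along a class $n_j=r+jL$ the variables $Y_{n_j}$ depend on the pairwise disjoint blocks $\varepsilon_{r+jL+1},\dots,\varepsilon_{r+(j+1)L}$. They are therefore independent, centered and bounded by $2$, and in particular they are bounded martingale differences for the filtration generated by these blocks. The determinism of $w_n$ is exactly what keeps them centered: a random weight correlated with the digits would destroy this. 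The strong law for bounded martingale differences recalled before Lemma~\ref{lem:stationary} gives $\frac1N\sum_{n<N,\,n\equiv r}Y_n\to0$ almost surely for each $r$. Summing the $L$ classes gives almost sure convergence of the truncated average to $0$.

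Finally, intersect the countably many full-probability events over $L\in\N$. On this intersection the $\limsup$ of the modulus of the average in \eqref{eq:weightedfuture} is at most $4\pi|k|B^{-L}$ for every $L$, hence it is $0$. No step is a real obstacle; the only point needing care is the stationarity of the law of $R_n$ (and of $R_n^{(L)}$). This gives the $n$-independent centering constants $\widehat\nu_B(k)$ and $c_L$, and it follows from the i.i.d.\ structure of the $\varepsilon_j$. The assumption $B\geq3$ is not needed for this lemma; presumably it matters only later, for the sparsity estimate.
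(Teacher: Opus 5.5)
Your proposal is correct and follows the paper's proof essentially step for step. It uses the same truncation $R_n^{(L)}$ with an $n$-independent mean, and the same splitting into residue classes modulo $L$, so that the truncated summands depend on disjoint digit blocks and are bounded martingale differences. It also uses the same final intersection over $L$ after a truncation error of order $|k|B^{-L}$; the paper keeps an extra factor $1/(B-1)$ there, which you harmlessly discard. Your remark that $B\geq3$ plays no role in this lemma is also accurate.
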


\begin{proof}
The first $n$ digits of $B^nZ$ form an integer. Since $k$ is an
integer as well, we have
\[
 U_n:=e(kB^nZ)=e(kR_n),\qquad
 R_n:=\sum_{j=1}^{\infty}\varepsilon_{n+j}B^{-j}.
\]
The remainder $R_n$ has the same distribution as $Z$, so
$\E U_n=\widehat\nu_B(k)$ for every $n$. The variables $U_n$ are
not independent: their tails overlap. We therefore truncate the tails
before applying a strong law.

For $L\geq1$, let
\[
 R_n^{(L)}:=\sum_{j=1}^L\varepsilon_{n+j}B^{-j},\qquad
 U_n^{(L)}:=e(kR_n^{(L)}),\qquad
 m_L:=\E U_n^{(L)}.
\]
The mean $m_L$ does not depend on $n$. Fix a residue class
$s\in\{0,\ldots,L-1\}$ and write $n_j=s+jL$. The random variable
$U_{n_j}^{(L)}$ depends only on the digits with indices
\[
 s+jL+1,\ldots,s+(j+1)L.
\]
These blocks are disjoint as $j$ varies. Consequently, the variables
\[
 w_{n_j}\bigl(U_{n_j}^{(L)}-m_L\bigr),\qquad j\geq0,
\]
are independent, centered and bounded by $2$ in modulus. They need
not be identically distributed because of the deterministic weights,
but they are bounded martingale differences for the filtration
obtained by revealing one block at a time. The strong law proved in
Section~\ref{sec:multiplication} gives
\[
 \frac1M\sum_{j=0}^{M-1}
        w_{s+jL}\bigl(U_{s+jL}^{(L)}-m_L\bigr)
           \longrightarrow0\qquad\text{almost surely}.
\]
Splitting the average over $0\leq n<N$ into these $L$ residue classes
therefore gives
\begin{equation}\label{eq:truncatedweighted}
 \frac1N\sum_{n=0}^{N-1}w_n\bigl(U_n^{(L)}-m_L\bigr)
            \longrightarrow0\qquad\text{almost surely}.
\end{equation}
Indeed, the contribution of each class is its average multiplied by
the number of its terms divided by $N$, and this last factor is
bounded by $1$.

It remains to control the discarded digits. Uniformly in $n$ and in
the realization of the digits,
\[
 0\leq R_n-R_n^{(L)}
       \leq\sum_{j=L+1}^{\infty}B^{-j}
       =\frac{B^{-L}}{B-1}.
\]
The Lipschitz bound for $e(kt)$ gives
\[
 |U_n-U_n^{(L)}|\leq\frac{2\pi|k|}{B-1}B^{-L},
 \qquad
 |\widehat\nu_B(k)-m_L|
       \leq\frac{2\pi|k|}{B-1}B^{-L}.
\]
Thus centering adds at most a second copy of the truncation error:
\[
 \left|(U_n-\widehat\nu_B(k))-(U_n^{(L)}-m_L)\right|
        \leq\frac{4\pi|k|}{B-1}B^{-L}.
\]
Together with~\eqref{eq:truncatedweighted}, this proves that almost
surely
\[
 \limsup_{N\to\infty}
 \left|\frac1N\sum_{n=0}^{N-1}
           w_n(U_n-\widehat\nu_B(k))\right|
       \leq\frac{4\pi|k|}{B-1}B^{-L}.
\]
We intersect the events of probability one over all positive integers
$L$, and then let $L\to\infty$. This proves~\eqref{eq:weightedfuture}.
The event may depend on the fixed sequence $(w_n)$; no simultaneous
assertion over all such sequences is needed.
\end{proof}

We are now ready for the perturbation lemma which gives the main step
in the absolutely normal case.

\begin{lemma}\label{lem:upgrade}
Fix an integer $b\geq2$. If $u\in\calN_b$ and $v\notin\calN_b$, then
there is a real number $z$ such that
\[
 u+z\in\calN,\qquad v+z\notin\calN_b.
\]
\end{lemma}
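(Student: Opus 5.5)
The plan is to construct $z$ at random as a lacunary series in base $b$:
\[
 z:=\sum_{j\geq1}\varepsilon_j\,b^{-n_j},
\]
with $\varepsilon_j$ i.i.d.\ fair $\{0,1\}$-valued and positions $n_1<n_2<\cdots$. The positions should be sparse enough that $\#\{j:n_j\leq N\}=o(N)$, but still numerous enough that every scale sees many random digits; a concrete choice to start with is $n_j=\lfloor j^{3/2}\rfloor$. I will show that almost every realization gives $u+z\in\calN$, while every realization gives $v+z\notin\calN_b$. Any $z$ in the intersection of these events then proves Lemma~\ref{lem:upgrade}.

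\emph{Step 1: $v+z\notin\calN_b$ for every realization.} Since $b^nz\equiv\sum_{n_j>n}\varepsilon_jb^{n-n_j}\pmod 1$, we get $\|b^nz\|\leq 2b^{-L}$ whenever no $n_j$ lies in $(n,n+L]$. The set of $n<N$ for which some $n_j$ lies in $(n,n+L]$ has size at most $L\cdot\#\{j:n_j\leq N+L\}=o(N)$. Hence, for each $h$,
\[
 A_N^{(b)}(h;v+z)-A_N^{(b)}(h;v)=O\bigl(|h|b^{-L}\bigr)+o(1).
\]
Letting $L\to\infty$, the Weyl sums of $v$ and $v+z$ have the same asymptotics. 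Since $v$ fails \eqref{eq:Weyl} for some $h$, so does $v+z$. The identical computation with $u$ in place of $v$ shows that $u+z\in\calN_b$ for every realization, and therefore $u+z\in\calN_{b^r}$ for all $r\geq1$.

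\emph{Step 2: $u+z\in\calN_c$ almost surely for $c$ multiplicatively independent of $b$.} Fix such $c$ and $h\neq0$, and write
\[
 S_N:=\frac1N\sum_{n<N}w_n\,e(hc^nz),\qquad w_n:=e(hc^nu).
\]
I would use the Davenport--Erd\H{o}s--LeVeque criterion: if $\sum_N N^{-1}\E|S_N|^2<\infty$, then $S_N\to0$ almost surely. By independence of the $\varepsilon_j$,
\[
 \E|S_N|^2=\frac1{N^2}\sum_{m,n<N}w_n\overline{w_m}\prod_j\frac{1+e\bigl(h(c^n-c^m)b^{-n_j}\bigr)}2 .
\]
Each factor has modulus $\bigl|\cos\bigl(\pi h(c^n-c^m)b^{-n_j}\bigr)\bigr|$. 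It therefore suffices to show the following for all pairs with $|n-m|\geq K$, outside a set of $o(N^{2-\delta})$ exceptional pairs: at least $\gg\log N$ indices $j$ satisfy $\|h(c^n-c^m)b^{-n_j}\|\geq\kappa$, with $\kappa>0$ depending only on $b,c,h$. Then the off-diagonal terms are $O(N^{-\delta})$, and the diagonal contributes $O(K/N)$.

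\emph{Step 3 (the main obstacle): producing these $\gg\log N$ indices.} The relevant indices are those $j$ with $b^{n_j}$ at least as large as $|h(c^n-c^m)|$. For these, the base-$b$ digits of $h(c^n-c^m)$ just above position $n_j$ must not be nearly all $0$ or all $b-1$, which is exactly where multiplicative independence must enter. My first attempt would invoke a Schmidt-type lemma~\cite{Schmidt1960}: integers of the form $h(c^n-c^m)$ have, in base $b$, many non-degenerate digit pairs spread over a positive proportion of their length. If that count fails for the fixed choice $n_j=\lfloor j^{3/2}\rfloor$, I would instead make the positions denser at the sparse levels needed, keeping them $o(N)$. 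Alternatively, I would replace single digits by short independent blocks whose averages are controlled by Lemma~\ref{lem:weightedfuture} (with $B=b^r$ and the deterministic weights $w_n$), reducing Step 2 to a Fourier decay estimate for $\nu_B$ along $c^n$.

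Finally, intersect over the countably many $c$ and $h$ to get $u+z\in\calN$ almost surely. Combined with Step 1, this yields the required $z$.
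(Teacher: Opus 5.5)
Your Step~1 is correct, and for base $b$ it is simpler than the paper's argument: with zero-density positions every realization $z$ is completely deterministic in base $b$, so $u+z\in\calN_b$ and $v+z\notin\calN_b$ hold with no exceptional event. The gap is Step~3. It carries the whole difficulty of the lemma and you leave it open. Moreover, the sparsity that makes Step~1 work is what rules out the available tools.

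Your law charges at most $2^{\#\{j:n_j\le N\}}=2^{O(N^{2/3})}$ base-$b$ cylinders of length $N$. So it has zero entropy and zero dimension, and it is not $T_b$-invariant. Hence Theorem~\ref{thm:HS} cannot be applied, and no polynomial Fourier decay as in Lemma~\ref{lem:decaynormal} is possible. This is forced, not an accident of the choice $n_j=\lfloor j^{3/2}\rfloor$: any $z$ for which Step~1 works for all $u,v$ lies in $\calD_b$, i.e.\ all its base-$b$ empirical limit measures have entropy zero.

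Your reduction also has the indexing reversed. Write $\xi=h(c^n-c^m)$. Factors with $b^{n_j}\gg|\xi|$ are close to $1$. The useful indices are those with $b^{n_j}\le|\xi|$, where $\|\xi b^{-n_j}\|$ is governed by the base-$b$ digits of $\xi$ immediately below position $n_j$. So you need non-degenerate digits of $h(c^n-c^m)$ at a prescribed zero-density set of positions, for most pairs $(m,n)$.

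None of your proposed tools supplies this.
\begin{itemize}
\item Schmidt's lemma concerns the product over \emph{all} positions. It extracts non-degenerate digit pairs somewhere in windows of $\asymp\log N$ consecutive positions. Such a window contains only $O((\log N)^{2/3})$ of your $n_j$, and just one at height $\asymp N$. A count of good pairs somewhere in the window says nothing about the pairs sitting exactly at the $n_j$.
\item ``Making the positions denser where needed'' is not a construction.
\item Lemma~\ref{lem:weightedfuture} is about $e(kB^nZ)$ with $B=b^r$, i.e.\ the base-$B$ shift that removes digits of $Z$. It says nothing about $c^nZ$. The ``Fourier decay of $\nu_B$ along $c^n$'' it is supposed to reduce to is precisely the missing input.
\end{itemize}

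The paper resolves this by giving up zero density. It takes $Z=\sum_j\varepsilon_jb^{-rj}$. Its law $\nu_B$, with $B=b^r$, is $T_B$-invariant, ergodic and of entropy $\log 2$. Theorem~\ref{thm:HS} applied to $t\mapsto u+t$ then gives $u+Z\in\calN_c$ almost surely for every $c$ multiplicatively independent of $b$. Multiplicatively dependent $c$ satisfy $\calN_c=\calN_b$; note that the powers $b^r$ alone do not cover, e.g., $b=4$, $c=2$.

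The price is that $Z$ is no longer $b$-deterministic, so base $b$ is handled quantitatively:
\begin{itemize}
\item Fix a witness $h$ with $\eta=\limsup_N|A_N^{(b)}(h;v)|>0$.
\item Use $0\le\{b^nz\}\le b^s/(B-1)$ for $n\equiv s\pmod r$. This bounds the change of this single Weyl average by $2\pi|h|/(r(b-1))$ for \emph{every} $z$ with base-$B$ digits in $\{0,1\}$.
\item Choose $r$ so that this bound is $<\eta/2$.
\item Base-$b$ normality of $u+Z$ then follows from Lemma~\ref{lem:weightedfuture} with weights $e(kB^nu)$ and $u\in\calN_B$.
\end{itemize}
Letting the perturbation depend on $v$ through $(h,\eta)$ is what allows it to have positive entropy.
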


\begin{proof}
We first determine how sparse the perturbation has to be. We denote by $A_N^{(b)}(h;x) := \frac1N \sum_{n =0}^{N-1} e(hb^nx)$ the Weyl averages from Weyl's criterion. Since
$v\notin\calN_b$, Weyl's criterion gives an integer $h\neq0$ such that
\[
 \eta:=\limsup_{N\to\infty}|A_N^{(b)}(h;v)|>0.
\]
Thus one fixed Fourier mode already witnesses the failure of
normality. Choose an integer $r\geq2$ sufficiently large that
\begin{equation}\label{eq:sparsechoice}
 \frac{2\pi|h|}{r(b-1)}<\frac\eta2,
\end{equation}
and put $B=b^r$. Consider the compact set
\[
 C_B:=\left\{\sum_{j=1}^{\infty}\varepsilon_jB^{-j}:
                      \varepsilon_j\in\{0,1\}\right\}.
\]
In base $b$, the digits of a point in $C_B$ can be nonzero only at
positions divisible by $r$. We claim that no perturbation from this
set can remove the selected failure of normality of $v$.

To see this, fix any $z=\sum_{j\geq1}\varepsilon_jB^{-j}\in C_B$ and
write $n=qr+s$, where $0\leq s<r$. Then,
\[
 b^nz=\sum_{j=1}^q\varepsilon_jb^{(q-j)r+s}
       +b^s\sum_{j=1}^{\infty}\varepsilon_{q+j}B^{-j}.
\]
The first sum is an integer. The second is nonnegative and at most
$b^s/(B-1)<1$. Hence,
\begin{equation}\label{eq:sparsebound}
 0\leq\{b^nz\}
      =b^s\sum_{j=1}^{\infty}\varepsilon_{q+j}B^{-j}
      \leq\frac{b^s}{B-1}.
\end{equation}
Since $h$ is an integer, it follows that
$
 |e(hb^nz)-1|=|e(h\{b^nz\})-1|
                    \leq2\pi|h|\{b^nz\}.$

Using the definition of the Weyl averages, we obtain
\begin{align*}
 |A_N^{(b)}(h;v+z)-A_N^{(b)}(h;v)|
 &=\left|\frac1N\sum_{n=0}^{N-1}
                e(hb^nv)\bigl(e(hb^nz)-1\bigr)\right|
    \leq\frac{2\pi|h|}{N}\sum_{n=0}^{N-1}\{b^nz\}.
\end{align*}
Each residue class modulo $r$ has asymptotic proportion $1/r$.
Applying~\eqref{eq:sparsebound} in these classes and summing the
geometric progression gives
\begin{align}
 \limsup_{N\to\infty}
 |A_N^{(b)}(h;v+z)-A_N^{(b)}(h;v)|
 &\leq\frac{2\pi|h|}{r(B-1)}\sum_{s=0}^{r-1}b^s=\frac{2\pi|h|}{r(b-1)}<\frac\eta2.
 \label{eq:discrepancystable}
\end{align}
In particular, the triangle inequality, along a subsequence realizing
the limsup which defines $\eta$, yields
\[
 \limsup_{N\to\infty}|A_N^{(b)}(h;v+z)|
       \geq\eta-\frac{2\pi|h|}{r(b-1)}>\frac\eta2.
\]
Another application of Weyl's criterion proves
\begin{equation}\label{eq:allperturbationsbad}
 v+z\notin\calN_b\qquad\text{for every }z\in C_B.
\end{equation}
This part of the argument has no exceptional event: the estimate holds
for every choice of the digits of $z$.

Let us now choose those digits independently with probabilities
$1/2,1/2$, and write
\[
 Z:=\sum_{j=1}^{\infty}\varepsilon_jB^{-j},\qquad
 \nu_B:=\operatorname{law}(Z).
\]
We want to show that $u+Z$ is absolutely normal almost surely. We
begin with the original base. Normality in base $b$ is equivalent to
normality in base $B=b^r$, so $u\in\calN_B$. Fix an integer $k\neq0$
and apply Lemma~\ref{lem:weightedfuture} with
\[
 w_n:=e(kB^nu).
\]
These weights are deterministic, since $u$ is fixed. Multiplying out
the expression in~\eqref{eq:weightedfuture} gives
\begin{equation}\label{eq:goodbasepreserved}
 A_N^{(B)}(k;u+Z)
       -\widehat\nu_B(k)A_N^{(B)}(k;u)
            \longrightarrow0\qquad\text{almost surely}.
\end{equation}
The second term tends to zero by the normality of $u$ in base $B$.
It follows that $A_N^{(B)}(k;u+Z)\to0$ almost surely. We intersect
the events over all nonzero integers $k$ and use Weyl's criterion.
Thus,
\[
 \Prob(u+Z\in\calN_B)=1,
 \qquad\text{and hence}\qquad
 \Prob(u+Z\in\calN_b)=1.
\]
Notice that the Fourier coefficient $\widehat\nu_B(k)$ need not
vanish. The point is that it multiplies an average which tends to
zero. This is where the original normality of $u$ enters the proof.

It remains to obtain normality in the other bases. The law $\nu_B$
is invariant under $T_Bx=Bx\pmod1$, because $T_B$ removes the first
random digit:
\[
 T_BZ=\sum_{j=1}^{\infty}\varepsilon_{j+1}B^{-j}.
\]
It is ergodic for the same reason as the Bernoulli measures in
Section~\ref{sec:multiplication}. More explicitly, the digit coding
identifies this transformation with the one-sided fair Bernoulli
shift; an invariant event is a tail event and hence has probability
zero or one. The coding is one-to-one on the digit sequences used
here, since $B\geq4$ and the digits are only $0$ and $1$. A block
of $m$ such digits has $2^m$ equally likely values, so the entropy is
$\log2>0$.

Let $c\geq2$ be an integer multiplicatively independent of $b$. It is
then also multiplicatively independent of $B=b^r$. We may apply
Theorem~\ref{thm:HS} to $\nu_B$ and the affine diffeomorphism
$t\mapsto u+t$. It follows that
\[
 \Prob(u+Z\in\calN_c)=1.
\]
If $c$ is multiplicatively dependent on $b$, there are positive
integers $p,q$ such that $b^p=c^q$. The power invariance of normality
gives
\[
 \calN_c=\calN_{c^q}=\calN_{b^p}=\calN_b,
\]
so this case has already been covered. Finally, there are only
countably many integer bases. Their probability-one events can be
intersected, and we conclude that
\[
 \Prob(u+Z\in\calN)=1.
\]
Choose a realization $z$ in this event. Every realization belongs to
$C_B$, and hence~\eqref{eq:allperturbationsbad} applies to this same
$z$. We have obtained both required properties.
\end{proof}

We can now finish the additive characterization.

\begin{proof}[Proof of Theorem~\ref{thm:addition}]
The fixed-base statement is Rauzy's theorem, stated above as
Theorem~\ref{thm:Rauzy}. Let $a\in\bigcap_{b\geq2}\calD_b$ and
$x\in\calN$. For each integer base $b$, we have $x\in\calN_b$ and
$a\in\calD_b$, so $x+a\in\calN_b$. Thus $x+a$ belongs to every
$\calN_b$, and hence $a\in\calD$.

For the converse, we argue by contraposition. Suppose
$a\notin\bigcap_{b\geq2}\calD_b$, and choose a base $b$ such that
$a\notin\calD_b$. By the definition of $\calD_b$, there is a real
number $u$ with
\[
 u\in\calN_b,\qquad u+a\notin\calN_b.
\]
Apply Lemma~\ref{lem:upgrade} to $u$ and $v=u+a$. It gives a common
perturbation $z$ for which
\[
 x:=u+z\in\calN,
 \qquad
 x+a=(u+a)+z=v+z\notin\calN_b.
\]
In particular, $x+a\notin\calN$. Thus $a\notin\calD$, which proves
\eqref{eq:absoluteD}. The equivalence of forward preservation and
preservation in both directions follows from the reflection argument
above.
\end{proof}

Combining this theorem with Proposition~\ref{prop:entropy}, we obtain
the explicit digit characterization
\begin{equation}\label{eq:digitcriterion}
 \boxed{\displaystyle
 a\in\calD\quad\Longleftrightarrow\quad
 \text{for every }b\geq2,\quad
 \inf_{k\geq1}\frac1k\limsup_{N\to\infty}H_{b,k,N}(a)=0.}
\end{equation}
The order of the limits matters. Small entropy along just one
subsequence of prefixes is not the condition
in~\eqref{eq:digitcriterion}. Nor does this characterization assert
that $\calD$ consists only of rational numbers; that would be an
additional assertion about the intersection
in~\eqref{eq:absoluteD}.

\section{Normality preserving maps}
\label{sec:C2}

We now turn to Theorems~\ref{thm:2} and~\ref{thm:C11}. The first
part of this section shows why a nonvanishing second derivative is
incompatible with preservation of normality. We then construct a
non-affine preserver whose first derivative is Lipschitz. In the
second argument, the map is affine on many intervals, but the changes
of slope are supported on a compact set of positive measure.

\subsection{Proof of Theorem~\ref{thm:2}: the locally \texorpdfstring{$C^2$}{C2} case}

The main input is the following finite self-similar case of the
theorem of Baker and Banaji \cite[Corollary~1.5]{BB25}. Recall that a
probability measure $\mu$ is self-similar if
\[
 \mu=\sum_{i=1}^mp_i(\varphi_i)_*\mu,
 \qquad p_i>0,\quad\sum_{i=1}^mp_i=1,
\]
for finitely many contracting similarities $\varphi_i$.
Here $(\varphi_i)_*\mu$ denotes the pushforward measure, namely the law
of $\varphi_i(Y)$ when $Y$ has law $\mu$.

\begin{theorem}[Baker--Banaji]\label{thm:BB}
Let $\mu$ be a non-atomic self-similar probability measure supported
in $[0,1]$. Suppose $F$ is $C^2$ on a neighborhood of $[0,1]$ and
$F''(t)\neq0$ throughout $[0,1]$. There are constants $C,\delta>0$
such that
\begin{equation}\label{eq:Fourierdecay}
 \left|\int e(\xi F(t))\,d\mu(t)\right|
       \leq C|\xi|^{-\delta}\qquad(\xi\neq0).
\end{equation}
\end{theorem}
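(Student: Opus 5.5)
Since this is the cited result of Baker and Banaji, the plan is to reconstruct the standard ``linearization plus exceptional-frequency'' argument; I have not checked the details. Write
\[
 \mu=\sum_{i=1}^m p_i(\varphi_i)_*\mu,\qquad \varphi_i(t)=\lambda_it+\tau_i .
\]
The first step is the non-atomic hypothesis. I plan to show it forces a Frostman bound $\mu(J)\le C_0|J|^{s}$ for some $s>0$ and all intervals $J$, arguing as in the proof of Corollary~\ref{cor:dimension}: an interval of length $\rho$ meets boundedly many cylinders of contraction ratio comparable to $\rho$, and each such cylinder has mass at most $\rho^{s}$ unless the maps share a fixed point, which is exactly the atomic case. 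Non-atomicity should also exclude the degenerate situation in which all $\varphi_i$ coincide after composition, which is needed for the next step.

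The second step is a stopping-time decomposition at a scale $\rho=|\xi|^{-1/2}$. Iterate the self-similarity until each cylinder $\varphi_{\mathbf i}=\varphi_{i_1}\circ\dots\circ\varphi_{i_n}$ has ratio $\lambda_{\mathbf i}\approx\rho$. This gives
\[
 \int e(\xi F(t))\,d\mu(t)=\sum_{\mathbf i}p_{\mathbf i}\int e\bigl(\xi F(\varphi_{\mathbf i}(y))\bigr)\,d\mu(y).
\]
Taylor's theorem with $F\in C^2$ gives, on each cylinder,
\[
 F(\varphi_{\mathbf i}(y))=F(x_{\mathbf i})+F'(x_{\mathbf i})\lambda_{\mathbf i}y+O(\rho^2).
\]
The error term contributes $O(|\xi|\rho^2)$ to the phase. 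I would therefore take $\rho=|\xi|^{-1/2-\kappa}$ with a small $\kappa>0$, so that this error is $O(|\xi|^{-2\kappa})$ after summation. Each piece then reduces to $|\widehat\mu(\xi F'(x_{\mathbf i})\lambda_{\mathbf i})|$, a frequency of size about $|\xi|^{1/2-\kappa}$.

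The third step, which I expect to be the main obstacle, is an Erdős--Kahane / large-deviation estimate for self-similar measures. The claim is that for every $\varepsilon>0$ there is $\delta_0>0$ such that the exceptional set
\[
 \{t\in[T,2T]:\ |\widehat\mu(t)|\ge T^{-\delta_0}\}
\]
can be covered by $O(T^{\varepsilon})$ intervals of length $1$. To prove it, I would write $\widehat\mu(t)=\prod_j\Phi_j(t\cdot\text{scale}_j)$ along a random symbolic path, in analogy with \eqref{eq:fourier-product}. A factor close to $1$ forces a near-lattice condition $\|t\lambda_{\mathbf j}(\tau_i-\tau_{i'})\|\ll1$. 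Requiring this at most levels pins $t$ down to few choices per level, which gives the counting bound. This is the step that must handle irrational contraction ratios and overlaps, and I would first try it in the homogeneous case of equal $\lambda_i$ before generalizing.

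Finally, I combine the steps using $F''\ne0$. This makes $x\mapsto F'(x)$ bi-Lipschitz, so the frequencies $\xi F'(x_{\mathbf i})\lambda_{\mathbf i}$ are spread at least as much as the points $x_{\mathbf i}$. Pulling the exceptional set back through $F'$ covers it by $O(|\xi|^{\varepsilon})$ intervals of length about $|\xi|^{-1/2}$ in the $x$-variable. The Frostman bound then shows these intervals carry total $\mu$-mass at most $|\xi|^{\varepsilon-s/2}$. Choosing $\varepsilon<s/4$ and summing, the non-exceptional cylinders contribute at most $|\xi|^{-\delta_0/2}$ and the exceptional ones at most $|\xi|^{-s/4}$. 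Together with the Taylor error this yields \eqref{eq:Fourierdecay} with $\delta=\min(\delta_0/2,\,s/4,\,2\kappa)$.
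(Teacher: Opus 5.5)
The paper does not prove this statement; it quotes it from \cite[Corollary~1.5]{BB25}. Your architecture is the scheme of Kaufman and of Mosquera--Shmerkin, and it is sound in outline. You linearize on cylinders of size $|\xi|^{-1/2-\kappa}$, show that $\widehat\mu$ is small off a sparse set of frequencies, and use a Frostman bound to discard the cylinders whose frequencies land in that set. The gap is in your third step, the one that carries all the weight. When the ratios $\lambda_i$ are not all equal, $\widehat\mu$ is not a product. It only satisfies $\widehat\mu(t)=\sum_i p_i e(\tau_i t)\widehat\mu(\lambda_i t)$, and the modulus of this average cannot be bounded by a product of factors along a single symbolic path. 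So the Erd\H{o}s--Kahane count has nothing to act on, and a further idea is needed. One option is Tsujii's theorem on Fourier transforms of self-similar measures, which gives exactly your covering bound for every self-similar measure on $\R$ that is not a Dirac mass. The other is to disintegrate $\mu=\int\mu_\omega\,dP(\omega)$ into random infinite convolutions, for instance by pairing the words $ij$ and $ji$. These carry the same weight $p_ip_j$ and the same linear part $\lambda_i\lambda_j$, while their translations differ by $(1-\lambda_i)(1-\lambda_j)(z_i-z_j)$, where $z_i$ is the fixed point of $\varphi_i$. Each $\widehat{\mu_\omega}$ is then a genuine product over random scales. You would still need large deviations for the number of such free scales, and you would have to run your linearization on the components $\mu_\omega$, which are no longer self-similar. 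As written, your third step only covers homogeneous measures, where $\widehat\mu(t)=\prod_{j\geq0}\Phi(\lambda^j t)$.

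Two further points need repair. First, your justification of the Frostman bound fails under overlaps. An interval of length $\rho$ can meet exponentially many cylinders of ratio comparable to $\rho$; take the four maps $x\mapsto x/3+c$ with $c\in\{0,1/6,1/3,2/3\}$. Meanwhile the weights $p_{\mathbf i}\leq p_{\max}^{|\mathbf i|}$ decay whether or not the fixed points coincide, so it is the count of cylinders, not their individual weight, that breaks. The bound itself is true and follows from separation instead. Relabel so that $z_1\neq z_2$, which non-atomicity allows, and put $K:=\supp\mu$. For large $N$ the sets $\varphi_{1^N}(K)$ and $\varphi_{2^N}(K)$ are disjoint. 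Hence an interval shorter than $c|\lambda_{\mathbf k}|$ misses one of $\varphi_{\mathbf k1^N}(K)$ and $\varphi_{\mathbf k2^N}(K)$, so the words below $\mathbf k$ meeting it carry at most $(1-\min(p_1,p_2)^N)p_{\mathbf k}$. Iterating over geometrically spaced scales gives $\mu(J)\leq C|J|^s$. Second, $F''\neq0$ still allows $F'$ to vanish at one point $x_0\in[0,1]$, where your frequencies are much smaller than $|\xi|^{1/2-\kappa}$. Discard the cylinders within $|\xi|^{-\theta}$ of $x_0$, at cost $\lesssim|\xi|^{-s\theta}$ by Frostman, and apply the exceptional-set bound dyadically elsewhere. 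With unequal ratios, also group the cylinders by the value of $\lambda_{\mathbf i}$, of which there are only polylogarithmically many, before pulling back through $F'$.

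None of these obstacles arises in the paper's only application. There $\mu_{b,\mathbf p}$ is homogeneous without overlaps, its Frostman bound appears in the proof of Corollary~\ref{cor:dimension}, and the relevant $F$ has $F'\neq0$. The Erd\H{o}s--Kahane count for $\prod_{j\geq1}\Phi(tb^{-j})$ is also elementary. So your route would make the proof of Theorem~\ref{thm:2} independent of \cite{BB25}.
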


The quantity on the left is the Fourier transform of $F_*\mu$.
We shall use that the stated decay forces almost every point for this
pushforward measure to be absolutely normal. Let us include the
argument, since it makes clear how the Fourier estimate enters the
proof.

\begin{lemma}\label{lem:decaynormal}
If a probability measure $\nu$ on $\R$ satisfies
$|\widehat\nu(\xi)|\leq C|\xi|^{-\delta}$ for some $C,\delta>0$
and all $\xi\neq0$, then $\nu$-almost every point is absolutely normal.
\end{lemma}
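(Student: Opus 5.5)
This is the classical Davenport--Erdős--LeVeque argument. Since there are only countably many bases $b\ge2$ and countably many frequencies $h\in\Z\setminus\{0\}$, it suffices to fix $b$ and $h$ and show that $A_N^{(b)}(h;x)\to0$ for $\nu$-almost every $x$. Weyl's criterion~\eqref{eq:Weyl} then gives $x\in\calN_b$ almost surely, and intersecting over $b$ gives absolute normality almost surely.

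\emph{Second moment bound.} Expanding the square and integrating against $\nu$ gives
\[
 \int|A_N^{(b)}(h;x)|^2\,d\nu(x)
 =\frac1{N^2}\sum_{m,n=0}^{N-1}\widehat\nu\bigl(h(b^m-b^n)\bigr).
\]
The diagonal terms $m=n$ contribute $1/N$. For $m\neq n$ we have $|h(b^m-b^n)|\ge b^{\max(m,n)-1}$, so the decay hypothesis bounds each off-diagonal term by $C b^{\delta}b^{-\delta\max(m,n)}$. For fixed $n$, at most $2n+1$ pairs have $\max(m,n)=n$. Summing over $n$ therefore gives a convergent series $\sum_n(2n+1)b^{-\delta n}<\infty$. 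Hence
\[
 \int|A_N^{(b)}(h;x)|^2\,d\nu\le\frac{C'}{N}
\]
with $C'$ independent of $N$.

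\emph{Borel--Cantelli along squares.} Applying Chebyshev's inequality to the bound above gives
\[
 \sum_j\nu\bigl(|A_{j^2}^{(b)}(h;x)|>\varepsilon\bigr)\le\frac{C'}{\varepsilon^2}\sum_j j^{-2}<\infty.
\]
By Borel--Cantelli, followed by intersecting over $\varepsilon=1/\ell$, we get $A_{j^2}^{(b)}(h;x)\to0$ for $\nu$-almost every $x$.

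\emph{Filling in the gaps.} The summands have modulus one, so the gap estimate~\eqref{eq:martingalegaps} from Section~\ref{sec:multiplication} transfers verbatim. For $j^2\le N<(j+1)^2$ one gets $|A_N-A_{j^2}|\le 2(2j+1)/j^2\to0$. Thus $A_N^{(b)}(h;x)\to0$ almost surely, and the countable intersection described at the start finishes the argument.

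The only step needing care is the off-diagonal sum, namely checking that $b^m-b^n$ is large whenever $m\ne n$ so that the polynomial decay $|\xi|^{-\delta}$ yields a summable bound. This is the exponential-growth observation above.
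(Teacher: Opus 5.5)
Your proposal is correct and follows the paper's proof essentially step for step. Both use the second-moment expansion with an exponential lower bound on $|b^m-b^n|$, then Chebyshev and Borel--Cantelli along the squares $j^2$, then the gap estimate to pass to all $N$, and finally the countable intersection over the pairs $(b,h)$. The only cosmetic difference is that you bound $|h(b^m-b^n)|\ge b^{\max(m,n)-1}$ and count ordered pairs, whereas the paper groups conjugate pairs and uses $b^n-b^m\ge(1-b^{-1})b^n$.
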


\begin{proof}
Fix an integer base $b\geq2$ and a nonzero integer $h$. By Weyl's
criterion, we want to prove that $A_N^{(b)}(h;x)\to0$ for
$\nu$-almost every $x$. We begin with its second moment. Expanding
the square gives
\[
 \int|A_N^{(b)}(h;x)|^2\,d\nu(x)
   =\frac1{N^2}\sum_{m,n=0}^{N-1}
                    \widehat\nu\bigl(h(b^n-b^m)\bigr).
\]
The $N$ diagonal terms equal $1$. The off-diagonal terms occur in
complex conjugate pairs. Taking absolute values and applying the
Fourier estimate, we obtain
\begin{equation}\label{eq:normalitysecondmoment}
 \int|A_N^{(b)}(h;x)|^2\,d\nu(x)
 \leq\frac1N+\frac{2C|h|^{-\delta}}{N^2}
       \sum_{0\leq m<n<N}(b^n-b^m)^{-\delta}.
\end{equation}
For $m<n$, we have $b^m\leq b^{n-1}$, and hence
\[
 b^n-b^m\geq(1-b^{-1})b^n.
\]
There are $n$ possible indices $m<n$ for each fixed $n$. Therefore,
\[
 \sum_{0\leq m<n<N}(b^n-b^m)^{-\delta}
 \leq(1-b^{-1})^{-\delta}
                     \sum_{n=1}^{\infty}n b^{-\delta n}<\infty.
\]
The bound is independent of $N$. In particular, there is a constant
$C_0$, depending on $b,h$ and $\nu$, such that
\[
 \int|A_N^{(b)}(h;x)|^2\,d\nu(x)\leq\frac{C_0}{N}.
\]

As in the martingale argument in Section~\ref{sec:multiplication},
we first pass to square indices. For every $\varepsilon>0$,
\[
 \sum_{j=1}^{\infty}
 \nu\{x:\ |A_{j^2}^{(b)}(h;x)|>\varepsilon\}
 \leq\frac{C_0}{\varepsilon^2}
                      \sum_{j=1}^{\infty}\frac1{j^2}<\infty.
\]
Chebyshev's inequality and Borel--Cantelli, followed by the countable
intersection over $\varepsilon=1,1/2,1/3,\ldots$, imply
$A_{j^2}^{(b)}(h;x)\to0$ for $\nu$-almost every $x$.

To pass to all indices, suppose $j^2\leq N<(j+1)^2$. Every summand
in a Weyl average has modulus $1$, so separating the first $j^2$
terms from the remaining terms gives, for every $x$,
\[
 |A_N^{(b)}(h;x)-A_{j^2}^{(b)}(h;x)|
       \leq2\frac{N-j^2}{N}
       \leq2\frac{2j+1}{j^2}\longrightarrow0.
\]
Thus $A_N^{(b)}(h;x)\to0$ almost surely. Finally, there are only
countably many pairs $(b,h)$ under consideration. Intersecting their
full-measure sets and applying Weyl's criterion proves absolute
normality.
\end{proof}

We are now ready to prove the characterization. The important point
is to apply Theorem~\ref{thm:BB} to a local inverse of $f$, not to
$f$ itself. We want a normal input whose image is not normal, so we
start with a measure on nonnormal points in the range of $f$ and
pull it back.

\begin{proof}[Proof of Theorem~\ref{thm:2}]
Suppose first that $f\in C^2_{\mathrm{loc}}(I)$ is not affine-linear.
Then $f''$ is not identically zero. Indeed, if $f''\equiv0$ on the
interval $I$, then $f'$ is constant and $f$ is affine-linear.
Choose a point where $f''$ does not vanish. By continuity, there is
an open interval $U\subset I$ on which $f''$ has a fixed nonzero
sign. The derivative $f'$ is strictly monotone on $U$, so it can
vanish at most once there. After restricting to a smaller nonempty
open interval $J\subset U$, we may assume that both $f'$ and $f''$
are nonzero throughout $J$.

The function $f$ is strictly monotone on $J$. Its restriction is a
$C^2$ diffeomorphism from $J$ onto the open interval $V=f(J)$.
Let $g:V\to J$ be its inverse. Differentiating $f(g(y))=y$ once
and twice gives
\[
 f'(g(y))g'(y)=1,
 \qquad
 f''(g(y))(g'(y))^2+f'(g(y))g''(y)=0.
\]
Consequently,
\begin{equation}\label{eq:inversesecond}
 g'(y)=\frac1{f'(g(y))},\qquad
 g''(y)=-\frac{f''(g(y))}{(f'(g(y)))^3}\neq0
 \qquad(y\in V).
\end{equation}
Choose rational numbers $r$ and $s>0$ such that
$[r,r+s]\subset V$. This is possible because $V$ is a nonempty
open interval. Define
\[
 F(t):=g(r+st),\qquad 0\leq t\leq1.
\]
Since the compact interval $[r,r+s]$ lies inside $V$, the function
$F$ is defined and $C^2$ on a neighborhood of $[0,1]$. Moreover,
$F''(t)=s^2g''(r+st)\neq0$ there after possibly reducing that
neighborhood.

Let us first consider preservation of $\calN_b$. Take the random
number $Y$ from~\eqref{eq:Y}, with a nonuniform probability vector
as in~\eqref{eq:p}. Its law $\mu_{b,\mathbf p}$ is self-similar
for the maps
\[
 \varphi_d(t)=\frac{t+d}{b},\qquad d=0,\ldots,b-1,
\]
with weights $p_d$. This follows by separating the first digit of
$Y$ from the remaining digits. It is also non-atomic. To see this,
put $p_*:=\max_dp_d<1$. Each prescribed initial block of $n$ digits
has probability at most $p_*^n$. A point has at most two base-$b$
expansions, so its mass is at most $2p_*^n$ for every $n$, and
hence is zero.

All assumptions of Theorem~\ref{thm:BB} are therefore satisfied by
$\mu_{b,\mathbf p}$ and $F$. The law of $F(Y)$ has polynomial
Fourier decay. By Lemma~\ref{lem:decaynormal},
\[
 \Prob(F(Y)\in\calN)=1.
\]
On the other hand, the digits of $Y$ have limiting frequencies
$p_d$, which are not all $1/b$. Thus $Y\notin\calN_b$ almost
surely. Since $r,s$ are rational and $s\neq0$, Wall's theorem gives
\[
 f(F(Y))=r+sY\notin\calN_b\qquad\text{almost surely}.
\]
Both assertions hold on the intersection of two probability-one
events. Also $F(Y)\in J\subset I$ for every realization. Choosing
one realization in that intersection gives a $b$-normal input in
$I$ whose image is not $b$-normal, a contradiction.

For preservation of $\calN$, use the binary random variable
from~\eqref{eq:Y2} in the same construction. Again $F(Y)$ is
absolutely normal almost surely, whereas $r+sY$ is almost surely
not normal in base $2$, and hence is not absolutely normal. This
gives the same contradiction. We conclude that in either case $f$
must be affine-linear on $I$.

It remains to determine the admissible coefficients. Write
$f(x)=ax+c$. If $a=0$, the relevant normal class meets $I$, since
it has full Lebesgue measure. Preservation therefore holds exactly
when the constant $c$ belongs to that class. From now on, let
$a\neq0$.

We first exclude irrational $a$. Choose rationals $r$ and $s>0$
with $[r,r+s]\subset f(I)$, and consider
\begin{equation}\label{eq:affineinversewitness}
 X:=\frac{r+sY-c}{a}
    =\frac{s}{a}Y+\frac{r-c}{a}.
\end{equation}
The choice of the range interval guarantees that $X\in I$ for
every realization of $Y$. If $a\notin\Q$, then $s/a\notin\Q$,
since otherwise $a=s/(s/a)$ would be rational. For the fixed-base
case, take $Y$ from~\eqref{eq:Y}. Proposition~\ref{prop:irrational},
applied with $\gamma=s/a$ and $\eta=(r-c)/a$, gives
\[
 \Prob(X\in\calN_b)=1.
\]
But $f(X)=r+sY\notin\calN_b$ almost surely, again by Wall's
theorem. This contradicts preservation on $I$. For the absolutely
normal case, take $Y$ from~\eqref{eq:Y2} and use
Proposition~\ref{prop:irrational2} instead. Then $X$ is absolutely
normal almost surely and $f(X)$ is not normal in base $2$ almost
surely. Thus both cases require $a\in\Q\setminus\{0\}$.

Finally, we identify the translation parameter $c$. The assumption
only concerns inputs in $I$, so let us explain why it gives a global
translation preserver. Write $S$ for either $\calN_b$ or $\calN$,
and let $y\in S$ be arbitrary. By Wall's theorem, $y/a\in S$.
The interval $I-y/a$ is nonempty and open, so it contains a rational
number $q$. Then $y/a+q\in I\cap S$, and the preservation assumption
gives
\[
 f(y/a+q)=y+aq+c\in S.
\]
Since $aq$ is rational, subtracting it does not change normality.
Hence $y+c\in S$. This holds for every $y\in S$, and therefore
$c\in\calD_b$ in the fixed-base case and $c\in\calD$ in the
absolutely normal case.

Conversely, a normal constant plainly preserves the relevant class.
If $a\in\Q\setminus\{0\}$ and $c$ belongs to the corresponding
class of additive preservers, then $x\mapsto ax$ preserves normality
by Wall's theorem and $x\mapsto x+c$ preserves it by definition.
Their composition $x\mapsto ax+c$ has the same property, also after
restriction to $I$. This completes the proof of both characterizations.
\end{proof}

\subsection{Proof of Theorem~\ref{thm:C11}: a non-affine \texorpdfstring{$C^{1,1}$}{C1,1} example}
\label{sec:C11}

We now prove Theorem~\ref{thm:C11}. In fact, we first construct a map
preserving nonnormality, and then take its inverse as counterexample.
Let us explain the reason for this choice. Suppose $K$ is a compact
set consisting entirely of absolutely normal numbers, and suppose
$g$ is rational-affine, with nonzero slope, on every component of
$\R\setminus K$. If $y$ is not normal in some base $b$, then
$y\notin K$, so Wall's theorem applies to $g$ on the complementary
interval containing $y$. It follows that $g(y)$ is still not normal
in base $b$. The inverse map will therefore preserve normality.

The difficulty is to make $g$ non-affine while keeping its first
derivative Lipschitz and all of its affine coefficients on the
complementary intervals rational. We shall achieve this by prescribing
two moments of a bounded density. The Lebesgue measure is denoted by
$\lambda$ in the following.

\begin{lemma}\label{lem:rationaljets}
Let $K\subset(0,1)$ be a compact set of positive measure with empty
interior, and suppose
\begin{equation}\label{eq:fullsupportK}
 K=\supp(\lambda|_K).
\end{equation}
There is a non-affine increasing $C^{1,1}$ diffeomorphism
$g:\R\to\R$ such that on every component $J$ of $\R\setminus K$,
\begin{equation}\label{eq:rationalaffinegaps}
 g(x)=a_Jx+c_J\quad(x\in J),\qquad
 a_J\in\Q\cap[1,5/2],\quad c_J\in\Q.
\end{equation}
Moreover, $1\leq g'\leq5/2$ and $\Lip(g')\leq3/2$.
\end{lemma}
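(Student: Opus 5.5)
The plan is to build $g$ from its second derivative. We look for a bounded Borel density $\rho$ with $\rho=0$ off $K$ and $\tfrac12\leq\rho\leq\tfrac32$ on $K$, and set
\[
 g(x):=x+\int_{-\infty}^x(x-t)\rho(t)\,dt ,\qquad\text{so that}\qquad g''=\rho,\quad g'(x)=1+\int_{-\infty}^x\rho .
\]
Since $K\subset(0,1)$ is compact, everything is finite. This gives $1\leq g'\leq 1+\tfrac32\lambda(K)\leq\tfrac52$ and $\Lip(g')\leq\tfrac32$, so $g$ is an increasing $C^{1,1}$ diffeomorphism of $\R$. It is non-affine because $\int_{-\infty}^x\rho$ is not constant: $\rho\geq\tfrac12$ on a set of positive measure.

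On a component $J$ of $\R\setminus K$ with left endpoint $\alpha$ (take $\alpha=-\infty$ for the left unbounded gap), $\rho$ vanishes on $J$. Hence
\[
 g(x)=\bigl(1+m_0(\alpha)\bigr)x-m_1(\alpha),\qquad m_i(\alpha):=\int_{-\infty}^{\alpha}t^i\rho(t)\,dt .
\]
So \eqref{eq:rationalaffinegaps} holds, with $a_J=1+m_0(\alpha)\in[1,5/2]$, as soon as $m_0(\alpha),m_1(\alpha)\in\Q$ for every gap. For the left unbounded gap this is automatic. For the right unbounded gap it means the total moments $\int\rho$ and $\int t\rho$ must be rational. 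The gaps are countable, so we enumerate the bounded ones as $J_1,J_2,\ldots$ with left endpoints $\alpha_1,\alpha_2,\ldots$, and construct $\rho=\lim_n\rho_n$ inductively, starting from $\rho_{-1}=\1_K$.

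In step $0$ we add a small correction supported on $K$ making both total moments rational. In step $n\geq1$, the previously treated endpoints $\alpha_1,\dots,\alpha_{n-1}$, together with $\pm\infty$, cut the line into intervals. The gap $J_n=(\alpha_n,\beta_n)$ lies inside one of them, say $(\alpha',\beta')$, where $\alpha'$ and $\beta'$ are previously treated endpoints (or gap endpoints of the already treated gaps). Both $(\alpha',\alpha_n)$ and $(\beta_n,\beta')$ are open intervals meeting $K$: consecutive distinct gaps are separated by points of $K$. By \eqref{eq:fullsupportK} we therefore have $\lambda(K\cap(\alpha',\alpha_n))>0$ and $\lambda(K\cap(\beta_n,\beta'))>0$.

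On $E_1:=K\cap(\alpha',\alpha_n)$ we choose two bounded functions $\varphi_1,\varphi_2$ whose moment vectors $(\int\varphi_i,\int t\varphi_i)$ are linearly independent. This is possible because $E_1$ has positive measure and is therefore not contained in a single point. Take, for instance, indicators of two disjoint positive-measure pieces of $E_1$ at different positions. Since $\Q^2$ is dense, a correction $\psi_1=s_1\varphi_1+s_2\varphi_2$ with $|s_1|,|s_2|$ as small as we like moves $(m_0(\alpha_n),m_1(\alpha_n))$ to a rational point. We then add a compensating $\psi_2$ on $E_2:=K\cap(\beta_n,\beta')$ with $\int\psi_2=-\int\psi_1$ and $\int t\psi_2=-\int t\psi_1$, built in the same way. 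The total correction $\psi_1+\psi_2$ is supported in $(\alpha',\beta')$ and has zero mass and zero first moment there. Consequently the cumulative moments $m_0,m_1$ at every previously treated endpoint, including $+\infty$, are unchanged, while those at $\alpha_n$ become rational.

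Choosing the sup norms of the corrections at most $2^{-n-2}$, the densities $\rho_n$ converge uniformly to a limit $\rho$ with $\tfrac12\leq\rho\leq\tfrac32$ on $K$ and $\rho=0$ off $K$. The moments at each $\alpha_n$ are frozen from step $n$ on, so they are rational for the limit as well. The main point to check carefully is this step: the existence of two independent moment directions on each positive-measure piece of $K$, which is exactly where \eqref{eq:fullsupportK} and the empty interior of $K$ are used, and the fact that the compensation keeps all earlier moments exactly fixed.
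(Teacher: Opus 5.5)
Your proposal is correct and is essentially the paper's proof. It uses the same ansatz $g(x)=x+\int_K(x-t)_+w(t)\,dt$ with $\tfrac12\le w\le\tfrac32$, the same reduction to rational mass and first moment to the left of each gap, and the same induction in which a two-parameter correction on $K$ to the left of $J_n$ is compensated on $K$ to the right, so that all earlier moments stay exactly fixed. The paper separates gaps by interior points $q_j\in J_j$ rather than by endpoints, which makes no difference.

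Two small remarks. First, the open intervals $(\alpha',\alpha_n)$ and $(\beta_n,\beta')$ meet $K$ only because the full-support assumption $K=\supp(\lambda|_K)$ rules out isolated points of $K$, so two gaps never share an endpoint; your justification should say this explicitly. Second, contrary to your closing sentence, the empty interior of $K$ is not used anywhere in the argument.
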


\begin{proof}
We look for $g$ in the form
\begin{equation}\label{eq:gconstruction}
 g(x):=x+\int_K(x-t)_+w(t)\,dt,
 \qquad \frac12\leq w(t)\leq\frac32
             \quad\text{for almost every }t\in K,
\end{equation}
where $(x-t)_+:=\max\{x-t,0\}$. For the moment, suppose such a
bounded density $w$ has been chosen. Differentiation under the
integral gives
\begin{equation}\label{eq:gderivative}
 g'(x)=1+\int_{K\cap(-\infty,x)}w(t)\,dt,
 \qquad
 g''(x)=w(x)\1_K(x)\quad\text{almost everywhere}.
\end{equation}
The first formula holds at every $x$: the only possible failure of
differentiability of $(x-t)_+$ occurs at the single point $t=x$,
which has Lebesgue measure zero. The bounded density allows dominated
convergence. The second formula follows from the fundamental theorem
for Lebesgue integrals.

In particular, the first derivative is continuous. More precisely,
if $x<y$, then
\[
 0\leq g'(y)-g'(x)=\int_{K\cap[x,y)}w(t)\,dt
                  \leq\frac32(y-x).
\]
Also, since $K\subset(0,1)$,
\[
 1\leq g'(x)\leq1+\frac32\lambda(K)\leq\frac52.
\]
Thus $g\in C^{1,1}$ with the desired derivative bounds. Since
$g'\geq1$, the map is strictly increasing and tends to $-\infty$
and $+\infty$ at the respective ends of the real line. It is a
bijection of $\R$ onto itself, and its inverse is continuously
differentiable. Moreover, $g$ is not affine: its derivative equals
$1$ to the left of $K$ and equals $1+\int_Kw>1$ to the right.

We now examine its restriction to the complementary intervals.
Let $J=(\alpha,\beta)$ be a bounded component of $\R\setminus K$.
For $x\in J$, every $t\in K$ lies either to the left of $x$ or to
the right of $x$, and the part lying to the left is independent of
$x\in J$. Consequently,~\eqref{eq:gconstruction} becomes
\begin{equation}\label{eq:gaponmoments}
 g(x)=(1+M_J)x-P_J\qquad(x\in J),
\end{equation}
where
\[
 M_J:=\int_{K\cap(-\infty,\alpha)}w(t)\,dt,
 \qquad
 P_J:=\int_{K\cap(-\infty,\alpha)}t w(t)\,dt.
\]
Single endpoints do not affect these integrals. Thus both affine
coefficients are rational as soon as the mass $M_J$ and the first
moment $P_J$ are rational. On the left unbounded component we already
have $g(x)=x$. On the right unbounded component the same formula holds
with the total moments
\[
 M:=\int_Kw(t)\,dt,
 \qquad P:=\int_Kt w(t)\,dt.
\]
Our task is therefore to choose $w$ so that all these pairs of
moments are rational, while retaining the pointwise bounds
in~\eqref{eq:gconstruction}.

Let us first record the elementary freedom available for correcting
two moments. Suppose $E_1,E_2\subset K$ have positive measure and
are separated, with $E_1$ to the left of $E_2$. Write
\[
 m_i:=\lambda(E_i),\qquad
 \tau_i:=\frac1{m_i}\int_{E_i}t\,dt\qquad(i=1,2).
\]
The numbers $\tau_i$ are the respective mean positions, and the
separation gives $\tau_1<\tau_2$. A correction
$v=u_1\1_{E_1}+u_2\1_{E_2}$ changes mass and first moment by
\begin{equation}\label{eq:momentmatrix}
 \begin{pmatrix}\int_Kv(t)\,dt\\[2pt]\int_Kt v(t)\,dt\end{pmatrix}
 =\begin{pmatrix}m_1&m_2\\m_1\tau_1&m_2\tau_2\end{pmatrix}
        \begin{pmatrix}u_1\\u_2\end{pmatrix}.
\end{equation}
The determinant is $m_1m_2(\tau_2-\tau_1)>0$. Hence, for every
prescribed pair of changes $(\Delta M,\Delta P)$, there is a unique
choice of $u_1,u_2$ which realizes it. For these fixed sets there is
a finite constant $C_E$ such that
\[
 \|v\|_{L^\infty(K)}=\max\{|u_1|,|u_2|\}
       \leq C_E\max\{|\Delta M|,|\Delta P|\}.
\]
In particular, the correction can be made arbitrarily small by asking
for sufficiently small changes in the two moments.

We shall repeatedly choose two such separated sets inside a given
positive-measure part of $K$. This is always possible: a measure
obtained by restricting Lebesgue measure has no atoms. A
positive-measure part therefore has two distinct points in its
support, and small disjoint neighborhoods of those points give the
required sets of positive measure.

Start with the constant density $w\equiv1$. Choose two separated
positive-measure subsets of $K$. By the density of $\Q^2$ in $\R^2$,
there is a rational pair arbitrarily close to the current total mass
and first moment. Formula~\eqref{eq:momentmatrix} allows us to change
the total moments to that rational pair with a correction of
$L^\infty$ norm less than $1/8$. Denote the resulting density by
$w_0$. Thus,
\[
 \|w_0-1\|_{L^\infty(K)}<\frac18,
 \qquad
 \int_Kw_0\in\Q,\qquad \int_Kt w_0(t)\,dt\in\Q.
\]
We shall keep these total moments fixed from now on.

The bounded components of $\R\setminus K$ form a countable family;
each of them contains a distinct rational number. Enumerate them as
$J_1,J_2,\ldots$, and choose a point $q_j\in J_j$ for every $j$.
Suppose $w_{n-1}$ has been constructed so that the total moments and
the two moments to the left of $J_1,\ldots,J_{n-1}$ are rational.
We explain how to obtain the same property at $J_n$ without changing
any of those earlier values.

Write $J_n=(\alpha_n,\beta_n)$. Among the previously chosen points
$q_1,\ldots,q_{n-1}$, let $\ell$ be the nearest one to the left of
$q_n$, taking $\ell=0$ if there is none. Let $r$ be the nearest one
to the right, taking $r=1$ if there is none. Thus,
\[
 \ell<\alpha_n<\beta_n<r,
 \qquad
 \{q_1,\ldots,q_{n-1}\}\cap(\ell,r)=\varnothing.
\]
We will make one correction in $K\cap(\ell,\alpha_n)$ and a
compensating correction in $K\cap(\beta_n,r)$.

Both of these sets have positive measure. Let us check this on the
left. The endpoint $\alpha_n$ belongs to $K$. Choose a sufficiently
small neighborhood of $\alpha_n$ which lies in $(\ell,\beta_n)$.
By~\eqref{eq:fullsupportK}, this neighborhood has positive $K$-measure.
Its part to the right of $\alpha_n$ lies in the gap $J_n$, and the
single point $\alpha_n$ has zero measure. Thus the positive
$K$-measure must lie to its left, in $(\ell,\alpha_n)$. The argument
at $\beta_n$ is the same. We may therefore choose two separated
positive-measure subsets on each side:
\[
 E_1^-,E_2^-\subset K\cap(\ell,\alpha_n),
 \qquad
 E_1^+,E_2^+\subset K\cap(\beta_n,r).
\]

Let $(M_n,P_n)$ denote the moments of $w_{n-1}$ to the left of
$J_n$. Choose a rational pair $(M_n',P_n')$ close to $(M_n,P_n)$,
and put
\[
 \Delta M:=M_n'-M_n,
 \qquad \Delta P:=P_n'-P_n.
\]
Using $E_1^-,E_2^-$ in~\eqref{eq:momentmatrix}, choose a correction
$v_n^-$ with mass $\Delta M$ and first moment $\Delta P$. Using
$E_1^+,E_2^+$, choose a correction $v_n^+$ with mass $-\Delta M$
and first moment $-\Delta P$. Their sum
\[
 v_n:=v_n^-+v_n^+
\]
therefore satisfies
\begin{equation}\label{eq:zerototalcorrection}
 \int_Kv_n(t)\,dt=0,
 \qquad
 \int_Kt v_n(t)\,dt=0.
\end{equation}
The correction on the right does not contribute to the moments to
the left of $J_n$. Hence those two moments change by exactly
$(\Delta M,\Delta P)$ and become $(M_n',P_n')$, as desired.

Let us verify explicitly that every earlier prescription remains
unchanged. For $j<n$, the moments to the left of $J_j$ can be
computed by integrating over $K\cap(-\infty,q_j)$, because
$q_j\in J_j$ and the gap contains no part of $K$. If $q_j\leq\ell$,
neither correction contributes to these integrals. If $q_j\geq r$,
both corrections contribute, and their contributions cancel by
\eqref{eq:zerototalcorrection}. These are the only possibilities,
since there is no earlier $q_j$ inside $(\ell,r)$. The total moments
are unchanged for the same reason.

The two inverse matrices used for the left and right corrections are
fixed at this stage. By taking the rational pair $(M_n',P_n')$
sufficiently close to $(M_n,P_n)$, we can therefore ensure
\begin{equation}\label{eq:correctionbound}
 \|v_n\|_{L^\infty(K)}<2^{-n-3}.
\end{equation}
There is no need for a bound on the inverse matrices uniform in $n$:
at each stage, the density of $\Q^2$ allows the changes of moments
to be chosen as small as necessary. Set $w_n=w_{n-1}+v_n$. This
completes the induction.

The estimates~\eqref{eq:correctionbound} show that $w_n$ converges
in $L^\infty(K)$ to a bounded density $w$. Moreover,
\[
 \|w-1\|_{L^\infty(K)}
 \leq\|w_0-1\|_{L^\infty(K)}
                 +\sum_{n=1}^{\infty}\|v_n\|_{L^\infty(K)}
 <\frac18+\sum_{n=1}^{\infty}2^{-n-3}
 =\frac14.
\]
In particular, the bounds required
in~\eqref{eq:gconstruction} hold. For each fixed $J_j$, its two
moments are assigned rational values at stage $j$ and remain exactly
unchanged at every subsequent stage. Integration of $w_n$ and of
$tw_n$ passes to the limit by $L^\infty$ convergence on a finite
measure space. Thus the limiting moments at $J_j$ equal that same
rational pair. The total moments likewise remain equal to their
rational values assigned at stage $0$. We are not merely taking a
limit of varying rational numbers; each prescribed value is fixed
from a finite stage onward.

Define $g$ by~\eqref{eq:gconstruction} with this density $w$.
Formula~\eqref{eq:gaponmoments} gives rational affine coefficients
on every bounded complementary interval. The total moments give the
same conclusion on the right unbounded interval, and $g(x)=x$ on
the left one. The derivative bounds and the fact that $g$ is a
non-affine increasing diffeomorphism have already been verified.
This proves the lemma.
\end{proof}

It remains to choose the set $K$ and check the properties of the
inverse map.

\begin{proof}[Proof of Theorem~\ref{thm:C11}]
The set $\calN$ is Borel: by Weyl's criterion it is defined by
countably many convergence conditions on continuous functions.
It has full Lebesgue measure by Borel's theorem. Inner regularity
therefore gives a compact set
\[
 K_0\subset\calN\cap(0,1),\qquad\lambda(K_0)>0.
\]
Put $K:=\supp(\lambda|_{K_0})$. This is a compact subset of $K_0$.
Passing from $K_0$ to this support removes only a null set: the
complement of the support can be covered by countably many open
intervals having zero $\lambda|_{K_0}$-measure. Consequently,
$\lambda|_K=\lambda|_{K_0}$, so $K$ has positive measure and
satisfies~\eqref{eq:fullsupportK}. It still consists entirely of
absolutely normal numbers. Finally, it has empty interior, because
every nonempty open interval contains a rational number and no
rational number is normal.

Apply Lemma~\ref{lem:rationaljets} to this $K$, and let $g$ be the
resulting map. Define $f=g^{-1}$. We first check its regularity.
Since $g$ is continuously differentiable and $g'\geq1$, the inverse
function theorem gives
\[
 f'(x)=\frac1{g'(f(x))},\qquad \frac25\leq f'(x)\leq1.
\]
In particular, $f$ is globally Lipschitz with constant at most $1$.
For any $x,y\in\R$, we obtain
\begin{align*}
 |f'(x)-f'(y)|
 &=\frac{|g'(f(x))-g'(f(y))|}
             {g'(f(x))g'(f(y))}
 \leq |g'(f(x))-g'(f(y))|\\
 &\leq\frac32|f(x)-f(y)|
 \leq\frac32|x-y|.
\end{align*}
Thus $f\in C^{1,1}$, and both $f$ and its inverse $g$ have globally
Lipschitz derivatives. The map $f$ is increasing and is not affine,
since an affine inverse would force $g$ to be affine as well.

Fix now any integer base $b\geq2$. Suppose $y\notin\calN_b$.
Since $K\subset\calN\subset\calN_b$, the point $y$ lies outside
$K$. It belongs to a component $J$ of $\R\setminus K$, and on that
interval
\[
 g(y)=a_Jy+c_J,
 \qquad a_J\in\Q\setminus\{0\},\quad c_J\in\Q.
\]
Wall's theorem gives the equivalence
$g(y)\in\calN_b$ if and only if $y\in\calN_b$. In particular,
\[
 y\notin\calN_b\quad\Longrightarrow\quad g(y)\notin\calN_b.
\]
Let now $x\in\calN_b$ and set $y=f(x)$. If $y$ were not
$b$-normal, the preceding implication would give
$x=g(y)\notin\calN_b$, a contradiction. Therefore
$f(x)\in\calN_b$, proving~\eqref{eq:C11preserve}.

The same construction and the same set $K$ work for every integer
base $b$. Hence if $x$ is absolutely normal, its image $f(x)$ is
normal in every base and is absolutely normal as well. This proves
all assertions of Theorem~\ref{thm:C11}.
\end{proof}
\paragraph{Acknowledgments.}
The author thanks Christoph Aistleitner for suggesting this problem and making the author aware of the reference \cite{DGW24}.
This work was funded by the Deutsche Forschungsgemeinschaft
(DFG, German Research Foundation) -- 558731723.

\paragraph{AI Declaration.}
AI assistance has been used in the proof of Theorem~\ref{thm:2}: an LLM identified that a former proof for analytic maps can be generalized to $C^2$-functions. Parts of the proof of Theorem~\ref{thm:addition} and Theorem~\ref{thm:C11} have been typeset via AI based on handwritten notes without changing the arguments by the author. Further uses are limited to language and typographical corrections, stylistic improvements and minor improvements.


\begin{thebibliography}{99}

\bibitem{ABSS17}
C. Aistleitner, V. Becher, A.-M. Scheerer and T. A. Slaman.
\emph{On the construction of absolutely normal numbers.}
Acta Arith. \textbf{180} (2017), 333--346.

\bibitem{ABS22}
A. Algom, S. Baker and P. Shmerkin.
\emph{On normal numbers and self-similar measures.}
Adv. Math. \textbf{399} (2022), 108276.

\bibitem{BBCP04}
D. H. Bailey, J. M. Borwein, R. E. Crandall and C. Pomerance.
\emph{On the binary expansions of algebraic numbers.}
J. Th\'eor. Nombres Bordeaux \textbf{16} (2004), 487--518.

\bibitem{BB25}
S. Baker and A. Banaji.
\emph{Polynomial Fourier decay for fractal measures and their pushforwards.}
Math. Ann. \textbf{392} (2025), 209--261.

\bibitem{BF02}
V. Becher and S. Figueira.
\emph{An example of a computable absolutely normal number.}
Theoret. Comput. Sci. \textbf{270} (2002), 947--958.

\bibitem{BHS13}
V. Becher, P. A. Heiber and T. A. Slaman.
\emph{A polynomial-time algorithm for computing absolutely normal numbers.}
Inform. and Comput. \textbf{232} (2013), 1--9.

\bibitem{BD25}
V. Bergelson and T. Downarowicz.
\emph{On preservation of normality and determinism under arithmetic operations.}
Preprint, arXiv:2506.12929, 2025.

\bibitem{Borel09}
\'E. Borel.
\emph{Les probabilit\'es d\'enombrables et leurs applications arithm\'etiques.}
Rend. Circ. Mat. Palermo \textbf{27} (1909), 247--271.

\bibitem{Bug12}
Y. Bugeaud.
\emph{Distribution modulo one and Diophantine approximation.}
Cambridge Tracts in Mathematics 193, Cambridge University Press, 2012.

\bibitem{Cass59}
J. W. S. Cassels.
\emph{On a problem of Steinhaus about normal numbers.}
Colloq. Math. \textbf{7} (1959), 95--101.

\bibitem{Champernowne1933}
D. G. Champernowne.
\emph{The construction of decimals normal in the scale of ten.}
J. London Math. Soc. \textbf{8} (1933), 254--260.

\bibitem{Chow67}
Y. S. Chow.
\emph{On a strong law of large numbers for martingales.}
Ann. Math. Statist. \textbf{38} (1967), 610.

\bibitem{CE46}
A. H. Copeland and P. Erd\H{o}s.
\emph{Note on normal numbers.}
Bull. Amer. Math. Soc. \textbf{52} (1946), 857--860.

\bibitem{DGW24}
Y. Dayan, A. Ganguly and B. Weiss.
\emph{Random walks on tori and normal numbers in self-similar sets.}
Amer. J. Math. \textbf{146} (2024), 467--493.
Preprint version: arXiv:2002.00455v3.

\bibitem{DT97}
M. Drmota and R. F. Tichy.
\emph{Sequences, discrepancies and applications.}
Lecture Notes in Mathematics 1651, Springer, 1997.

\bibitem{Harman98}
G. Harman.
\emph{Metric number theory.}
London Mathematical Society Monographs, New Series 18,
Oxford University Press, 1998.

\bibitem{HS15}
M. Hochman and P. Shmerkin.
\emph{Equidistribution from fractal measures.}
Invent. Math. \textbf{202} (2015), 427--479.

\bibitem{ManaiAdv}
C. Manai.
\emph{Super-dense sets and their role in the theory of normal numbers}
Adv. Math. 503, 111205 (2026).

\bibitem{ManaiTranscendence}
C. Manai.
\emph{Transcendence meets normality: Construction of transcendentally normal numbers.}
To appear in Proc. London Math. Soc. (2026+) Preprint, arXiv:2508.09319.

\bibitem{ManaiBernoulli}
C. Manai.
\emph{Digit mixing under polynomial maps.}
Preprint, arXiv:2606.08325, 2026.

\bibitem{Rauzy76}
G. Rauzy.
\emph{Nombres normaux et processus d\'eterministes.}
Acta Arith. \textbf{29} (1976), 211--225.

\bibitem{Schmidt1960}
W. M. Schmidt.
\emph{On normal numbers.}
Pacific J. Math. \textbf{10} (1960), 661--672.

\bibitem{Schmidt1962}
W. M. Schmidt.
\emph{\"Uber die Normalit\"at von Zahlen zu verschiedenen Basen.}
Acta Arith. \textbf{7} (1962), 299--309.

\bibitem{Wall49}
D. D. Wall.
\emph{Normal numbers.}
Ph.D. thesis, University of California, Berkeley, 1949.
\vspace{-0.05cm}
\bibitem{Weyl1916}
H. Weyl.
\emph{\"Uber die Gleichverteilung von Zahlen mod. Eins.}
Math. Ann. \textbf{77} (1916), 313--352.

\end{thebibliography}
\end{document}